\newtheorem {theorem}{Theorem}[section]
\newtheorem {corollary}{Corollary}[section]
\newtheorem {lemma}{Lemma}[section]
\newtheorem {definition}{Definition}[section]
\newtheorem {remark}{Remark}[section]
\def\ar{a\kern-.370em\raise.16ex\hbox{\char95\kern-0.53ex\char'47}\kern.05em}
\def\ees{{\accent"5E e}\kern-.385em\raise.2ex\hbox{\char'23}\kern-.08em}
\def\eex{{\accent"5E e}\kern-.470em\raise.3ex\hbox{\char'176}}
\def\AR{A\kern-.46em\raise.80ex\hbox{\char95\kern-0.53ex\char'47}\kern.13em}
\def\EES{{\accent"5E E}\kern-.5em\raise.8ex\hbox{\char'23 }}
\def\EEX{{\accent"5E E}\kern-.60em\raise.9ex\hbox{\char'176}\kern.1em}
\def\ow{o\kern-.42em\raise.82ex\hbox{
  \vrule width .12em height .0ex depth .075ex \kern-0.16em \char'56}\kern-.07em}
\def\OW{O\kern-.460em\raise1.36ex\hbox{
\vrule width .13em height .0ex depth .075ex \kern-0.16em \char'56}\kern-.07em}
\def\UW{U\kern-.42em\raise1.36ex\hbox{
\vrule width .13em height .0ex depth .075ex \kern-0.16em \char'56}\kern-.07em}
\def\DD{D\kern-.7em\raise0.4ex\hbox{\char '55}\kern.33em}
\title{On definable open continuous mappings}
\author{S\~i Ti\d{\^e}p \DD inh$^\dag$}
\address{Institute of Mathematics, VAST, 18, Hoang Quoc Viet Road, Cau Giay District 10307, Hanoi, Vietnam}
\email{dstiep@math.ac.vn}
\author{TI\EES N-S\OW N PH\d{A}M$^\ddag$}
\address{Department of Mathematics, Dalat University, 1 Phu Dong Thien Vuong, Dalat, Vietnam}
\email{sonpt@dlu.edu.vn}
\date{\today}
\subjclass[2010]{26B10, 54C10, 03C64}
\keywords{Brouwer degree, Jacobian, Whyburn's conjecture, definable mappings/sets, open mappings, homeomorphisms}
\begin{document}

\begin{abstract}
For a definable continuous mapping $f$ from a definable connected open subset $\Omega$ of $\mathbb R^n$ into $\mathbb R^n,$ we show that the following statements are equivalent:
\begin{enumerate}[{\rm (i)}]
\item The mapping $f$ is open.
\item The fibers of $f$ are finite and the Jacobian of $f$ does not change sign on the set of points at which $f$ is differentiable.
\item The fibers of ${f}$ are finite and the set of points at which $f$ is not a local homeomorphism has dimension at most $n - 2.$
\end{enumerate}

As an application, we prove that Whyburn's conjecture is true for definable mappings: A definable open continuous mapping of one closed ball into another which maps boundary homeomorphically onto boundary is necessarily a homeomorphism.
\end{abstract}

\maketitle

\section{Introduction}
We are interested in the possible necessary and sufficient conditions for a continuous mapping to be open. 
Recall that a mapping is {\em open} if it maps open sets onto open sets.

By {\em Remmert's open mapping theorem} (see, for example, \cite[Theorem~2, p.~297]{Lojasiewicz1991} or \cite[Proposition~4, p.~132]{Narasimhan1966}), it is well known that a holomorphic mapping $f\colon\mathbb C^n\to\mathbb C^n$ is open if and only if its fibers are discrete.

From the work of Church~\cite{Church1963} (see also \cite{Church1978}) we know that if a mapping $f \colon\mathbb R^n \to \mathbb R^n$ is $C^n$ and light (i.e., the fibers of $f$ are totally disconnected), then $f$ is open if and only if the Jacobian (i.e., the determinant of the Jacobian matrix) of $f$ does not change sign, equivalently, the set of points at which $f$ is not a local homeomorphism has dimension at most $n - 2.$

For a real polynomial mapping $f\colon\mathbb R^n\to\mathbb R^n$, Gamboa and Ronga~\cite{Gamboa1996} proved that $f$ is open if and only if the fibers of $f$ are finite and the Jacobian of $f$ does not change sign.
After that, Hirsch~\cite{Hirsch2002} affirmed that the Jacobian of a real analytic open mapping $f\colon\mathbb R^n\to\mathbb R^n$ does not
change sign. Recently in \cite[Theorem 3.14]{Denkowski2017}, Denkowski and Loeb showed that a subanalytic (or definable in some o-minimal structure) mapping $f$ of class $C^1$ is open if and only if the fibers of $f$ are discrete and the Jacobian of $f$ does not change sign. 

In the non-smooth setting, a result of Scholtes~\cite{Scholtes2012} (see Corollary~\ref{Corollary31} below) stated that a piecewise affine mapping 
$f \colon \mathbb{R}^n \to \mathbb{R}^n$ is open if and only if it is coherently oriented (meaning that the Jacobian of affine mappings associated to $f$ have the same nonzero sign). See also \cite{Borwein1988, Gowda1996, Lee2021, Mordukhovich2018, Penot1989, Yen2008} for related works.

We present here a {\em definable non-smooth} version of the above results. Namely, let ${f} \colon \Omega \to \mathbb{R}^n$ be a definable continuous mapping, where $\Omega$ is a definable connected open set in $\mathbb R^n.$ Denote by $D_f$ the set of points in $\Omega$ at which $f$ is differentiable; this set is dense in $\Omega$ (see, e.g., \cite{Coste2000, Dries1998}). We also denote by $B_f$ the set of points in $\Omega$ at which $f$ fails to be a local homeomorphism. Then the following statements are equivalent:
\begin{enumerate}[{\rm (i)}]
\item The mapping $f$ is open.
\item The fibers of $f$ are finite and the Jacobian of $f$ does not change sign on $D_f.$
\item The fibers of ${f}$ are finite and the set $B_f$ has dimension at most $n - 2.$
\end{enumerate}

The idea of the proof of the equivalence (i)~$\Leftrightarrow$~(ii) is similar to those in \cite{Denkowski2017, Gamboa1996, Hirsch2002}. However, more arguments need to be taken into account in the non-smooth case.

\medskip
In \cite{Whyburn1951} Whyburn stated the following conjecture and verified it for the case $n = 2$: {\em Suppose that $f$ is a light open continuous mapping of one closed ball of dimension $n$ onto another. If $f$ maps the boundary homeomorphically, then $f$ is a homeomorphism.}

Whyburn's conjecture has been proved for several class of continuous mappings by McAuley~\cite{McAuley1965}.
The conjecture has been verified for differentiable mappings with an additional hypothesis by Cronin and McAuley~\cite{Cronin1966}.
It is also valid for $C^2$ mappings due to Marx~\cite{Marx1968}.
On the other hand, it has been shown by Wilson~\cite{Wilson1973} that the conjecture is false for continuous mappings on higher dimensional spaces.
However, the second result of this paper shows that the conjecture is still true for definable continuous mappings.

\medskip
The rest of this paper is organized as follows. Section~\ref{SectionPreliminary} contains some properties of definable sets and mappings in o-minimal structures. For the convenience of the reader, the classical invariance of domain theorem and some properties of the Brouwer degree are also recalled here. 
Finally, results are given in Section~\ref{Section3}.

\section{Preliminaries} \label{SectionPreliminary}

\subsection{Notation and definitions}
We suppose $1 \le n \in \mathbb{N}$ and abbreviate $(x_1, \ldots, x_n)$ by $x.$ The space $\mathbb{R}^n$ is equipped with the usual scalar product $\langle \cdot, \cdot \rangle$ and the corresponding Euclidean norm $\| \cdot\|.$ The open ball and sphere of radius $r$ centered at the origin in $\mathbb{R}^n$ will be denoted by $\mathbb{B}^n_r$ and $\mathbb{S}^{n - 1}_r,$  respectively. If $x \in \mathbb{R}^n$ and $\Omega \subset \mathbb{R}^n$ is a non-empty set then the distance from $x$ to $\Omega$ is defined by $\mathrm{dist}(x, \Omega):=\inf_{y \in \Omega}\|x - y\|.$ 
The closure and boundary of a set $\Omega \subset \mathbb{R}^n$ will be written as $\overline{\Omega}$  and $\partial \Omega,$ respectively. 

Let ${f}$ be a mapping from an open set $\Omega$ in $\mathbb{R}^n$ into $\mathbb{R}^n.$ Then $f$ is an {\em open mapping} if $f(U)$ is an open subset of $\mathbb{R}^n$ whenever $U$ is an open subset of $\Omega;$ $f$ has {\em finite fibers} if for each $y \in \mathbb{R}^n,$ the fiber $f^{-1}(y)$ is a finite (possibly empty) set. Let $D_f$ denote the set of points in $\Omega$ at which $f$ is differentiable.
If $x \in D_f$ then we denote the Jacobian matrix of ${f}$ at $x$ by $d{f}(x) =\left[\frac{\partial {f}_i}{\partial x_j}(x)\right],$ and the determinant of $d{f}(x)$ is the {\em Jacobian} of ${f}$ at $x,$ denoted by $J{f}(x).$ 
Let $R_f$ denote the set of points $x \in \Omega$ such that ${f}$ is of class $C^1$ in a neighborhood of $x$ and the Jacobian $Jf(x)$ is nonzero. Observe that  $R_f$ is an open set  but it is not necessarily connected.  As in \cite{Church1963}, the {\em branch} set of $f,$ denoted by $B_f,$ is the set of points in $\Omega$ at which $f$ fails to be a local homeomorphism. Note that $B_f \subset \Omega \setminus R_f$ by the inverse mapping theorem.

\subsection{The invariance of domain theorem and the Brouwer degree}

For the convenience of the reader, we recall here the classical invariance of domain theorem and some properties of the Brouwer degree.

\begin{lemma} [Invariance of domain] \label{Brouwer1}
Let $\Omega$ be an open subset of $\mathbb{R}^n.$ Then every injective continuous mapping from $\Omega$ into $\mathbb{R}^n$ is open.
\end{lemma}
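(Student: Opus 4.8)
This is the classical Brouwer invariance of domain theorem, and the plan is to derive it from the standard properties of the Brouwer degree recalled below. The statement is local, so it suffices to show that $f(\Omega)$ contains a neighbourhood of $f(x_0)$ for each $x_0 \in \Omega.$ Fix such an $x_0$ and choose $r > 0$ with the closed ball $\overline{\mathbb{B}}(x_0, r) \subset \Omega,$ writing $S := \partial \mathbb{B}(x_0, r)$ for its bounding sphere. Since $f$ is injective, $f(x_0) \notin f(S),$ and as $f(S)$ is compact the quantity $\rho := \mathrm{dist}(f(x_0), f(S))$ is strictly positive. First I would consider the degree $y \mapsto \deg(f, \mathbb{B}(x_0, r), y),$ which is well defined off $f(S)$ and, by homotopy invariance, constant on each connected component of $\mathbb{R}^n \setminus f(S);$ in particular it equals $\deg(f, \mathbb{B}(x_0, r), f(x_0))$ throughout the ball $\mathbb{B}(f(x_0), \rho).$ By the solvability property of the degree, once we know this common value is nonzero it follows that every $y$ with $\|y - f(x_0)\| < \rho$ has a preimage under $f$ in $\mathbb{B}(x_0, r),$ giving $\mathbb{B}(f(x_0), \rho) \subset f(\mathbb{B}(x_0, r)) \subset f(\Omega),$ which is precisely openness at $x_0.$

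Everything therefore reduces to the single inequality $\deg(f, \mathbb{B}(x_0, r), f(x_0)) \neq 0,$ and this is the step where injectivity must enter essentially; I expect it to be the main obstacle. To prove it I would exploit that, being a continuous injection of the compact set $\overline{\mathbb{B}}(x_0, r),$ the restriction $f|_{\overline{\mathbb{B}}(x_0,r)}$ is a homeomorphism onto its image. Hence it has a continuous inverse on $f(\overline{\mathbb{B}}(x_0, r)),$ which by Tietze's extension theorem extends coordinatewise to a continuous map $g \colon \mathbb{R}^n \to \mathbb{R}^n$ satisfying $g \circ f = \mathrm{id}$ on $\overline{\mathbb{B}}(x_0, r).$

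The final step applies the product (composition) formula for the Brouwer degree to $g \circ f$ at the point $x_0.$ Since $g \circ f$ is the identity on $\overline{\mathbb{B}}(x_0, r)$ and $g(f(S)) = S$ avoids $x_0,$ the left-hand side is $\deg(\mathrm{id}, \mathbb{B}(x_0, r), x_0) = 1,$ while the right-hand side is a sum over the bounded components $D_i$ of $\mathbb{R}^n \setminus f(S)$ of the terms $\deg(f, \mathbb{B}(x_0, r), D_i)\,\deg(g, D_i, x_0).$ The decisive simplification is that $f(\mathbb{B}(x_0, r))$ is connected and disjoint from $f(S),$ hence contained in the single component $D_0$ containing $f(x_0);$ consequently any $y \notin D_0$ has no preimage and contributes $\deg(f, \mathbb{B}(x_0, r), y) = 0.$ The formula thus collapses to $1 = \deg(f, \mathbb{B}(x_0, r), f(x_0))\,\deg(g, D_0, x_0),$ forcing $\deg(f, \mathbb{B}(x_0, r), f(x_0)) \neq 0$ and completing the argument.
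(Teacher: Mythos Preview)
Your degree-theoretic argument is correct and is in fact one of the standard proofs of invariance of domain; the paper itself does not prove the lemma but merely refers to Dold's \emph{Lectures on Algebraic Topology}. So your proposal goes well beyond what the paper does.

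Two remarks on presentation. First, your proof relies on the Leray product (multiplication) formula for the Brouwer degree, which is not among the degree properties the paper records in Lemmas~\ref{Brouwer2}--\ref{Brouwer3}; you should state it explicitly and cite it, since without it the ``final step'' is opaque. Second, when you collapse the sum to the single term indexed by $D_0$, you are implicitly using that $D_0$ is a \emph{bounded} component of $\mathbb{R}^n\setminus f(S)$, since the product formula sums only over bounded components. This is true, and indeed follows from the computation itself (were $D_0$ unbounded, every surviving term would vanish and the formula would read $1=0$), but it is worth saying in one line so the reader does not stumble. With those two clarifications your proof is complete.
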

\begin{proof}
See, for example, \cite[Chapter~4, Proposition~7.4]{Dold1972}.
\end{proof}

Suppose $\Omega$ is an open and bounded set in $\mathbb{R}^n,$ $f \colon \overline{\Omega} \rightarrow \mathbb{R}^n$ is a continuous mapping, and $y \not \in f(\partial \Omega).$ The {\em Brouwer degree} of $f$ on $\Omega$ with respect to $y,$ denoted by $\deg(f, \Omega, y),$ is a function of integer values which enjoys several important properties (normalization, domain decomposition, local constancy, homotopy invariance, etc.). The Brouwer degree is a power tool used in analysis and topology, in particular, it gives an estimation and the nature of the solution(s) of the equation $f(x) = y$ in $\Omega.$ For more details, the reader may refer to \cite{Deimling1985, LLoyd1978} and the references therein. 

The two lemmas below provide some useful properties of the Brouwer degree.

\begin{lemma}\label{Brouwer2}
Let $\Omega$ be an open and bounded set in $\mathbb{R}^n,$ $f \colon \overline{\Omega} \rightarrow \mathbb{R}^n$ be a continuous mapping, and $y \not \in f(\partial \Omega).$  Then the following statements hold:
\begin{enumerate}[{\rm (i)}]
\item If $\deg(f, \Omega, y) \ne 0,$ then there is $x \in \Omega$ such that $f(x) = y.$

\item For all $y' \in \mathbb{R}^n$ with $\|y' - y\| < \mathrm{dist} (y, f(\partial \Omega))$ we have
\begin{eqnarray*}
\deg(f, \Omega, y') & = & \deg(f, \Omega, y).
\end{eqnarray*}

\item If $H \colon \overline{\Omega} \times [0,1] \to \mathbb{R}^n$ is continuous, $\gamma \colon  [0,1] \to \mathbb{R}^n$ is continuous, and $\gamma(t) \not \in H(\partial \Omega, t)$ for every $t\in [0,1]$, then $\deg(H(\cdot, t), \Omega, \gamma(t))$ is independent of $t\in [0,1]$.
\end{enumerate}
\end{lemma}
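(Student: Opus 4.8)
The plan is to derive all three statements from the foundational properties of the Brouwer degree (normalization, additivity, homotopy invariance, and translation invariance) as developed in any of the standard references, for instance \cite{Deimling1985, LLoyd1978}. Once the degree has been constructed these assertions are essentially immediate, so the only real choice is how much of the axiomatic machinery to invoke; I would keep the derivations short and lean on the cited constructions.

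For (i), I would use the solution/excision property in the form: if $y \notin f(\overline{\Omega})$ then $\deg(f, \Omega, y) = 0$. Taking the contrapositive, $\deg(f, \Omega, y) \ne 0$ forces $y \in f(\overline{\Omega})$. Since $f(\overline{\Omega}) = f(\Omega) \cup f(\partial\Omega)$ and by hypothesis $y \notin f(\partial\Omega)$, we conclude $y \in f(\Omega)$, i.e. $f(x) = y$ for some $x \in \Omega$. For (ii), the key fact is that $y' \mapsto \deg(f, \Omega, y')$ is locally constant, hence constant on each connected component of $\mathbb{R}^n \setminus f(\partial\Omega)$. The open ball $\{y' : \|y' - y\| < \mathrm{dist}(y, f(\partial\Omega))\}$ is disjoint from $f(\partial\Omega)$ and contains $y$, so it lies in a single such component, and the asserted equality follows.

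The one statement worth isolating is (iii), because the target $\gamma(t)$ moves together with the homotopy $H(\cdot, t)$, whereas the textbook homotopy invariance keeps the target point fixed. The plan is to absorb the moving target by translation: set $G(x, t) := H(x, t) - \gamma(t)$, which is continuous on $\overline{\Omega} \times [0,1]$. The hypothesis $\gamma(t) \notin H(\partial\Omega, t)$ says precisely that $0 \notin G(\partial\Omega, t)$ for every $t \in [0,1]$, so the fixed-target homotopy invariance applies to $G$ and gives that $\deg(G(\cdot, t), \Omega, 0)$ is independent of $t$. Finally, translation invariance of the degree, $\deg(H(\cdot, t), \Omega, \gamma(t)) = \deg(H(\cdot, t) - \gamma(t), \Omega, 0) = \deg(G(\cdot, t), \Omega, 0)$, converts this into the claim.

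I do not anticipate a genuine obstacle: the content is entirely standard, and the only subtlety — the simultaneous motion of map and target in (iii) — is dissolved by the translation reduction above. Accordingly, in the write-up I would simply cite \cite{Deimling1985, LLoyd1978} for the construction and basic properties, and include the short translation argument for (iii) only if a self-contained treatment is preferred.
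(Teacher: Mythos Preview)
Your proposal is correct and aligns with the paper's own treatment: the paper simply writes ``See, for example, \cite{Deimling1985, LLoyd1978}'' and gives no further argument. Your sketch, including the translation reduction for (iii), is more detailed than what the paper provides but is entirely standard and consistent with those references.
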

\begin{proof}
See, for example, \cite{Deimling1985, LLoyd1978}.
\end{proof}

\begin{lemma}\label{Brouwer3}
Let $\Omega$ be an open and bounded set in $\mathbb{R}^n$ and $f \colon \overline{\Omega} \rightarrow \mathbb{R}^n$ be a continuous mapping. Then the following statements hold:
\begin{enumerate}[{\rm (i)}]
\item Let $x \in \Omega$ be such that the Jacobian matrix $df(x)$ exists and is nonsingular. Then there exists a neighborhood $W$ of $x$ such that $\overline{W} \cap f^{-1}(f(x)) = \{x\}$ and 
\begin{eqnarray*}
\deg (f, W, f(x)) &=& \mathrm{sign} Jf(x).
\end{eqnarray*}

\item Let $y \not \in f(\partial \Omega)$ be such that for every $x \in f^{-1}(y),$ the Jacobian matrix $df(x)$ exists and is nonsingular. Then
\begin{eqnarray*}
\deg (f, \Omega, y) &=& \sum_{x \in f^{-1}(y)} \mathrm{sign} Jf(x).
\end{eqnarray*}
\end{enumerate}
\end{lemma}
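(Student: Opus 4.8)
The plan is to derive both parts from the defining properties of the Brouwer degree recalled above, the key device being a linearization homotopy that converts the local behaviour of $f$ into that of its affine derivative, thereby circumventing the usual $C^1$ hypothesis.

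For part (i), I would first set $p := f(x)$ and use differentiability at $x$ to write
\[
 f(x + h) = p + L h + r(h), \qquad L := df(x), \quad \|r(h)\| = o(\|h\|).
\]
Since $L$ is nonsingular, there is a constant $c > 0$ (namely $c = 1/\|L^{-1}\|$) with $\|Lh\| \ge c\|h\|$ for all $h$. Choosing $\rho > 0$ small enough that $\|r(h)\| \le \tfrac{c}{2}\|h\|$ whenever $\|h\| \le \rho$, I let $W$ be the open ball of radius $\rho$ centered at $x$, set $A(\xi) := p + L(\xi - x)$, and consider
\[
 H(\xi, t) := (1 - t) A(\xi) + t\, f(\xi) = p + L(\xi - x) + t\, r(\xi - x), \qquad (\xi, t) \in \overline{W} \times [0,1].
\]
Next I would verify admissibility: for $\xi \in \partial W$, i.e.\ $\|\xi - x\| = \rho$, one has $\|H(\xi, t) - p\| \ge \|L(\xi - x)\| - \|r(\xi - x)\| \ge c\rho - \tfrac{c}{2}\rho = \tfrac{c}{2}\rho > 0$, so $p \notin H(\partial W, t)$ for every $t$. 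The same estimate for $0 < \|\xi - x\| \le \rho$ shows $f(\xi) \ne p$, giving $\overline{W} \cap f^{-1}(p) = \{x\}$. Homotopy invariance (Lemma~\ref{Brouwer2}(iii), with the constant path $\gamma \equiv p$) then yields
\[
 \deg(f, W, p) = \deg(A, W, p),
\]
and since the degree of a nonsingular affine map at an interior value equals the sign of the determinant of its linear part (a standard consequence of the normalization and homotopy properties), $\deg(A, W, p) = \mathrm{sign}\det L = \mathrm{sign} Jf(x)$. This proves (i).

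For part (ii), I would first note that $f^{-1}(y)$ is finite: it is a closed subset of $\overline{\Omega}$ disjoint from $\partial\Omega$ (as $y \notin f(\partial\Omega)$), hence compact, while by part (i) each of its points is isolated, and a compact discrete set is finite. Writing $f^{-1}(y) = \{x_1, \ldots, x_k\}$, I would apply part (i) to each $x_j$ to obtain open neighborhoods $W_j \subset \Omega$, shrunk to be pairwise disjoint, with $\overline{W_j} \cap f^{-1}(y) = \{x_j\}$ and $\deg(f, W_j, y) = \mathrm{sign} Jf(x_j)$. Since $f^{-1}(y) \subset \bigcup_j W_j$, the closed set $\overline{\Omega} \setminus \bigcup_j W_j$ avoids $f^{-1}(y)$, so $y \notin f(\overline{\Omega} \setminus \bigcup_j W_j)$, and the domain decomposition property of the degree gives
\[
 \deg(f, \Omega, y) = \sum_{j = 1}^{k} \deg(f, W_j, y) = \sum_{j=1}^{k} \mathrm{sign} Jf(x_j) = \sum_{x \in f^{-1}(y)} \mathrm{sign} Jf(x).
\]
When $f^{-1}(y) = \emptyset$ the right-hand side is $0$, consistent with $\deg(f,\Omega,y) = 0$ from Lemma~\ref{Brouwer2}(i).

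The only genuinely delicate point will be part (i): because $f$ is merely continuous and assumed differentiable only at the single point $x$, rather than of class $C^1$ near $x$, the classical $C^1$ formula for the local degree does not apply directly. The linearization homotopy above is exactly what bridges this gap, reducing the computation to the affine derivative while invoking nothing beyond first-order differentiability at $x$; checking that this homotopy stays away from $p$ on $\partial W$ — which rests solely on the $o(\|h\|)$ estimate together with the nonsingularity of $L$ — is the crux of the argument. Once (i) is in hand, part (ii) is a routine assembly combining finiteness of the fibre with the additivity of the degree.
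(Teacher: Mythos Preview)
Your argument is correct. The linearization homotopy $H(\xi,t)=p+L(\xi-x)+t\,r(\xi-x)$ is exactly the right device for bypassing the $C^1$ hypothesis, and your estimate $\|H(\xi,t)-p\|\ge c\rho/2$ on $\partial W$ cleanly handles both the admissibility of the homotopy and the isolation of $x$ in the fibre. One small omission: you should also require $\overline{W}\subset\Omega$, which is automatic once $\rho$ is chosen small enough since $\Omega$ is open. Part~(ii) is routine once (i) is available, and your finiteness argument and use of domain decomposition are fine.

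As for comparison with the paper: the paper does not actually prove this lemma. Its proof consists solely of citations --- to \cite{Deimling1985, LLoyd1978} for the smooth case and to \cite{HaTDX2021, Pourciau1983-2, Shannon1994} for the non-smooth case --- so you have supplied what the paper defers to the literature. Your linearization-homotopy approach is in fact the method used in those non-smooth references (notably Pourciau), so while you are strictly speaking giving more than the paper does, you are not taking a different route from the sources it points to.
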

\begin{proof}
See \cite{Deimling1985, LLoyd1978} for the smooth case and see \cite{HaTDX2021, Pourciau1983-2, Shannon1994} for the non-smooth case.
\end{proof}

\subsection{O-minimal structures and definable mappings}

The notion of o-minimality was developed in the late 1980s after it was noticed that many proofs of analytic and geometric properties of semi-algebraic sets and mappings can be carried over verbatim for subanalytic sets and mappings. We refer the reader to \cite{Coste2000, Dries1998, Dries1996} for the basic properties of o-minimal structures used in this paper. 

\begin{definition}{\rm
An {\em o-minimal structure} on $(\mathbb{R}, +, \cdot)$ is a sequence $\mathcal{D} := (\mathcal{D}_n)_{n \in \mathbb{N}}$ such that for each $n \in \mathbb{N}$:
\begin{itemize}
\item [(a)] $\mathcal{D}_n$ is a Boolean algebra of subsets of $\mathbb{R}^n$.
\item [(b)] If $X \in \mathcal{D}_m$ and $Y \in \mathcal{D}_n$, then $X \times Y \in \mathcal {D}_{m+n}.$
\item [(c)] If $X \in \mathcal {D}_{n + 1},$ then $\pi(X) \in \mathcal {D}_n,$ where $\pi \colon \mathbb{R}^{n+1} \to \mathbb{R}^n$ is the projection on the first $n$ coordinates.
\item [(d)] $\mathcal{D}_n$ contains all algebraic subsets of $\mathbb{R}^n.$
\item [(e)] Each set belonging to $\mathcal{D}_1$ is a finite union of points and intervals.
\end{itemize}
}\end{definition}

A set belonging to $\mathcal{D}$ is said to be {\em definable} (in that structure). {\em Definable mappings} in structure $\mathcal{D}$ are mappings whose graphs are definable sets in $\mathcal{D}.$

Examples of o-minimal structures are
\begin{itemize}
\item the semi-algebraic sets (by the Tarski--Seidenberg theorem),
\item the globally subanalytic sets, i.e., the subanalytic sets of $\mathbb{R}^n$ whose (compact) closures in the real projective space $\mathbb{R}\mathbb{P}^n$ are subanalytic (using Gabrielov's complement theorem).
\end{itemize}

In this note, we fix an arbitrary o-minimal structure on $(\mathbb{R}, +, \cdot).$ The term ``definable'' means definable in this structure.
We recall some useful facts which we shall need later.

\begin{lemma} [{Monotonicity}] \label{MonotonicityLemma}
Let $f \colon (a, b) \rightarrow \mathbb{R}$ be a definable function and $p$ be a positive integer. Then there are finitely many points $a = t_0 < t_1 < \cdots < t_k = b$ such that the restriction of $f$ to each interval $(t_i, t_{i + 1})$ is of class $C^p$, and either constant or strictly monotone.
\end{lemma}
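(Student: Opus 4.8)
The plan is to bootstrap everything from two ingredients: axiom (e), that every definable subset of $\mathbb{R}$ is a finite union of points and open intervals, and the remark that each analytic condition on $f$ that I care about --- continuity at a point, existence of the one-sided limits, differentiability at a point, and the value or the sign of a derivative --- is expressible by a first-order formula over the (definable) graph of $f$, and hence cuts out a definable subset of $(a,b)$. Consequently any such subset with empty interior is automatically finite. The whole argument then consists of producing, at each stage, a definable ``bad'' set which I show has empty interior, so that it contributes only finitely many partition points.

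The hard part, and the only genuinely o-minimal input, will be the passage to continuity, so I would establish first the $C^0$ statement: that $f$ is continuous off a finite set. The crux is the existence of one-sided limits. Fixing $c$ and any $y \in \mathbb{R}$, the set $\{x > c : f(x) > y\}$ is definable, so by axiom (e) it either contains an interval $(c, c + \delta)$ or is disjoint from one; letting $y$ vary, the threshold value at which this dichotomy switches pins down $\lim_{s \to c^{+}} f(s)$ in $[-\infty, +\infty]$, and symmetrically on the left. Continuity can then fail at $c$ only when the left limit, the value $f(c)$, and the right limit do not all coincide; if this definable set contained an interval $J$, then the definable function $x \mapsto \lim_{s \to x^{+}} f(s) - f(x)$ would have a constant nonzero sign on a subinterval of $J$, contradicting that $\lim_{s \to x^{+}} f(s)$ is by construction the right limit of $f$ at $x$. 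Hence the discontinuity set has empty interior, so it is finite.

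Next I would split each interval of continuity into monotone and constant pieces. The three sets of points at which $f$ is, respectively, locally strictly increasing, locally strictly decreasing, or locally constant are definable, and I claim they exhaust the interval off a finite set; each is therefore a finite union of intervals, and on each such interval continuity upgrades the pointwise local behaviour to genuine strict monotonicity or constancy. This disposes of the $C^0$ version.

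Finally, to reach $C^p$ I would induct on $p$, the case $p = 0$ being the above. Given $f$ continuous and monotone on a subinterval, Lebesgue's theorem makes it differentiable almost everywhere, so its non-differentiability set, being definable of measure zero and hence of empty interior, is finite; off that set $f'$ is a definable function, to which the continuity step applies, yielding $f \in C^1$ off a finite set. Applying the inductive hypothesis to the definable function $f'$ then gives $f' \in C^{p-1}$, that is $f \in C^p$, off a finite set, and on each $C^1$ piece the sign of $f'$ refines the partition into strictly monotone subintervals (where $f'$ has constant nonzero sign) and constant subintervals (where $f' \equiv 0$). Collecting the finitely many exceptional points produced at every stage yields the required $a = t_0 < t_1 < \cdots < t_k = b$. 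I expect the one-sided-limit and continuity step to be the real obstacle, since it is the only place where axiom (e) enters essentially; all later steps are routine once the definability of $f'$ and its cofinite domain are in hand.
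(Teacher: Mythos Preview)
The paper does not prove this lemma; it is quoted without proof as a standard fact of o-minimal geometry (with implicit reference to \cite{Coste2000, Dries1998, Dries1996}). So there is no argument in the paper to compare against, and I assess your outline on its own merits.

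The serious gap is in your continuity step. You correctly show that the one-sided limits $f^{+}(x)$ and $f^{-}(x)$ exist everywhere, and you correctly note that the discontinuity locus is definable, so it suffices to rule out it containing an interval. But your punchline --- that $f^{+}-f$ having constant nonzero sign on a subinterval $J$ ``contradicts that $\lim_{s\to x^{+}} f(s)$ is by construction the right limit of $f$ at $x$'' --- is not an argument: nothing you have written excludes a definable $f$ with $f^{+}(x)>f(x)$ at every point of $J$. A genuine contradiction can be extracted, but it takes work. For instance, one checks that $(f^{+})^{+}=f^{+}$ and $(f^{+})^{-}=f^{-}$, so the definable function $F:=f^{+}-f$ satisfies $F^{+}\equiv F^{-}\equiv 0$; then for each $c>0$ every point of $\{F>c\}$ is isolated, hence that definable set is finite, and $J=\bigcup_{n\geq 1}\{F>1/n\}$ would be countable, which is absurd. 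Some argument of this kind is needed, and you have not supplied one. This is precisely the step you yourself flagged as the obstacle, and it is. The standard treatments (e.g.\ van den Dries, \emph{Tame Topology and O-Minimal Structures}, Chapter~3) in fact reverse your order: piecewise monotonicity is proved first by a direct order-theoretic argument from axiom~(e), after which continuity off a finite set is immediate, since a monotone function has at most countably many discontinuities and a countable definable subset of $\mathbb{R}$ is finite.

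Your step~4 also hides work: the bare assertion that the locally-increasing, locally-decreasing, and locally-constant loci exhaust a cofinite set is essentially the content of the theorem. Granting continuity it can indeed be recovered --- the definable sets of local maxima and of local minima are each finite or contain an interval on which a continuous $f$ is forced to be constant, and on any interval avoiding both a continuous function without interior extrema is strictly monotone --- but this needs to be said. The Lebesgue-theorem shortcut in step~5 (a definable null set in $\mathbb{R}$ has empty interior, hence is finite) is legitimate since the ambient field is $\mathbb{R}$, though it is not the route taken in the o-minimal literature.
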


\begin{lemma}[Path connectedness]\label{PathConnectedness}
The following statements hold:
\begin{enumerate}[{\rm (i)}]
\item Every definable set has a finite number of connected components and each such component is definable.

\item Every definable connected set $X$ is path connected, i.e., for every points $x, y$ in $X,$ there exists a definable continuous curve $\gamma \colon [0, 1] \to X$ such that $\gamma(0)  = x$ and $\gamma(1) = y.$
\end{enumerate}
\end{lemma}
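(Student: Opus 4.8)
The plan is to derive both assertions from the \emph{Cell Decomposition Theorem} (see \cite{Coste2000, Dries1998, Dries1996}), the basic structural result of o-minimal geometry. Recall that this theorem produces, for any finite family of definable sets, a finite partition of $\mathbb{R}^n$ into definable \emph{cells}, each of which is definably homeomorphic to an open box $(0,1)^{d}$ for some $0 \le d \le n$ (a single point when $d = 0$), and such that every set in the family is a union of cells. First I would fix such a decomposition adapted to $X,$ writing $X = C_1 \cup \cdots \cup C_m$ as a finite disjoint union of cells. Since a box is path connected and the defining homeomorphism carries straight segments to definable curves, each cell $C_i$ is itself definably path connected; in particular each $C_i$ is connected. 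Statement~(i) is then immediate: a connected cell lies entirely within a single connected component of $X,$ so each component is the union of those cells it contains. As there are only finitely many cells, there are at most $m$ components, and each is a finite union of cells, hence definable.

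For statement~(ii), let $X$ be connected and fix $x, y \in X.$ I would introduce the relation $x \approx y$ meaning that $x$ and $y$ are joined by a \emph{definable} continuous path inside $X.$ This is an equivalence relation: paths are concatenated after reparametrisation, and definability is preserved under composition and concatenation. Because each cell is definably path connected, every cell is contained in a single $\approx$-class, so each class is a union of cells and therefore definable. The goal is to prove that each $\approx$-class is both open and closed in $X$; topological connectedness of $X$ will then force a single class, which is exactly the assertion.

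The main obstacle, and the genuine content of the lemma, is showing that a point lying in the frontier of a cell can be reached by a definable path running inside that cell, so that \emph{adjacent} cells lie in the same $\approx$-class. Concretely, suppose a point $p \in X$ lies in the closure of an $\approx$-class $Y$; then $p \in C_j$ for some cell $C_j$ and $p \in \overline{C_i}$ for some cell $C_i \subseteq Y.$ Using the \emph{curve selection lemma} (a standard tame-topology tool, see \cite{Coste2000, Dries1998}) applied to $C_i$ with limit point $p,$ I obtain a definable continuous curve $\gamma \colon [0,1] \to \overline{C_i}$ with $\gamma(0) = p$ and $\gamma\big((0,1]\big) \subseteq C_i.$ This curve joins $p \in C_j$ to an interior point of $C_i \subseteq Y,$ so $C_j \approx C_i$ and hence $C_j \subseteq Y,$ giving $p \in Y.$ Thus $Y$ is closed in $X;$ applying the same argument to the complementary union of cells shows $Y$ is open in $X$ as well. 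This is precisely where o-minimal tameness is essential: without it a frontier point need not be reachable by a well-behaved curve, and the classes could fail to be clopen.

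Chaining these frontier-crossings through the adjacency of cells then completes the argument, and connectedness of $X$ yields the single $\approx$-class, proving~(ii). A self-contained alternative, which I would keep in reserve, is an induction on the ambient dimension $n$ using the projection onto the last coordinate: the case $n = 1$ follows at once from axiom~(e), since a connected definable subset of $\mathbb{R}$ is a point or an interval, and in the inductive step each cell is a graph or a band over a path-connected base in $\mathbb{R}^{n-1},$ the only delicacy again being the passage across a shared frontier, handled as above.
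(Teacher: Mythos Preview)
Your argument is correct: deducing~(i) from cell decomposition and proving~(ii) by showing that the definable-path classes are clopen, with curve selection supplying the frontier-crossing curves, is the standard textbook route (see, e.g., van den Dries \cite{Dries1998} or Coste \cite{Coste2000}). One small point worth making explicit is that when $p\in\overline{Y}$ you use that $Y$ is a \emph{finite} union of cells to conclude $p\in\overline{C_i}$ for some $C_i\subseteq Y$; and that the curve produced by curve selection has image in $C_i\cup\{p\}\subseteq X$, so it really is a path in $X$. Both points are implicit in what you wrote but could be stated.

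As for comparison with the paper: there is nothing to compare. The paper does not prove Lemma~\ref{PathConnectedness} at all; it is listed among the preliminary facts about o-minimal structures, implicitly referred to the standard sources \cite{Coste2000, Dries1998, Dries1996}. Your write-up therefore supplies a proof where the paper simply quotes the result.
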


By the cell decomposition theorem (see, for example~\cite[4.2]{Dries1996}), for any $p \in \mathbb{N}$ and any nonempty definable subset $X$ of $\mathbb{R}^n,$ we can write $X$ as a disjoint union of finitely many definable $C^p$-manifolds of different dimensions. The {\em dimension} $\dim X$ of a nonempty definable set $X$ can thus be defined as the dimension of the manifold of highest dimension of such a decomposition. This dimension is well defined and independent on the decomposition of $X.$ By convention, the dimension of the empty set is taken to be negative infinity. 
A point $x\in X$ is {\em generic} if there exists a neighborhood $U$ of $x$ in $\mathbb R^n$ such that $X\cap U$ is a definable $C^1$-manifold of dimension $\dim X.$

\begin{lemma}\label{DimensionLemma}
Let $X \subset \mathbb R^n$ be a nonempty definable set. Then the following statements hold:
\begin{enumerate}[{\rm (i)}]
\item The set $X$ has measure zero if and only if $\dim X <n.$
\item The interior of $X$ is nonempty if and only if $\dim X = n.$
\item $\dim (\overline{X}\setminus X) < \dim X.$ In particular,  $\dim \overline{X} = \dim X.$
\item Let $Y \subset \mathbb{R}^n$ be a definable set containing $X.$ If $X$ is dense in $Y$ then $\dim (Y \setminus X) < \dim Y.$
\item If $Y \subset \mathbb{R}^n$ is definable, then $\dim (X \cup Y) = \max \{\dim X, \dim Y\}.$
\item Let $f \colon X \rightarrow \mathbb R^m$ be a definable mapping. Then $\dim f(X) \leq \dim X.$ 
\item The complement in $X$ of the set of generic points in $X$ is a definable set of dimension less than $\dim X.$
\end{enumerate}
\end{lemma}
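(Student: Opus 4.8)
The plan is to deduce all seven assertions from the cell decomposition theorem together with the definition of $\dim X$ as the maximal dimension of a cell in a finite decomposition of $X$. The single recurring device is that, given any finite family of definable subsets of $\mathbb{R}^n$, one may choose one cell decomposition of $\mathbb{R}^n$ simultaneously compatible with all of them, each set being a union of cells. From this I would first record the basic monotonicity of dimension under inclusion: if $A \subseteq B$ are definable, then, adapting a decomposition to the pair $\{A,B\}$, every cell lying in $A$ also lies in $B$, so $\dim A \le \dim B$. This monotonicity, the convention $\dim\varnothing=-\infty$, and the cell structure are the only standing tools.

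For (i) and (ii), decompose $X$ into cells. An $n$-dimensional cell in $\mathbb{R}^n$ is an open box, hence open and of positive Lebesgue measure, whereas a cell of dimension $<n$ has measure zero, as one checks by a straightforward induction on $n$ using the graph/band description of cells and Fubini's theorem. Consequently $X$ has nonempty interior, equivalently positive measure, if and only if its decomposition contains an $n$-cell, i.e. if and only if $\dim X = n$; this settles (i) and (ii) at once. Assertion (v) is the same idea: taking a decomposition compatible with $X$ and $Y$, the cells of $X\cup Y$ are exactly the cells of $X$ together with those of $Y$, so the top cell dimension over $X\cup Y$ equals $\max\{\dim X,\dim Y\}$ (the extension from two sets to any finite union being immediate).

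The heart of the matter, and the step I expect to be the main obstacle, is the frontier inequality (iii), $\dim(\overline{X}\setminus X) < \dim X$. I would reduce it to a single cell: if $X=\bigcup_C C$ is a cell decomposition, then $\overline{X}=\bigcup_C \overline{C}$, and since every cell $C$ lies in $X$, any point of $\overline{X}\setminus X$ lies in $\overline{C}\setminus C$ for some $C$; hence $\overline{X}\setminus X \subseteq \bigcup_C(\overline{C}\setminus C)$. Granting the cell estimate $\dim(\overline{C}\setminus C) < \dim C$ for each $C$, assertion (v) yields $\dim(\overline{X}\setminus X) \le \max_C(\dim C - 1) = \dim X - 1 < \dim X$. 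The cell estimate itself is the genuinely nontrivial input and is proved by induction on $n$: for a graph- or band-cell $C$ over a cell $C'\subset\mathbb{R}^{n-1}$, the frontier $\overline{C}\setminus C$ is assembled from $\overline{C'}\setminus C'$ (handled by induction) and from the graphs of the continuous, possibly infinite, functions defining $C$ over $C'$ and over $\overline{C'}\setminus C'$, each of dimension at most $\dim C' \le \dim C$; careful bookkeeping gives the strict drop. The corollary $\dim\overline{X}=\dim X$ then follows from $\overline{X}=X\cup(\overline{X}\setminus X)$ together with (v). Once (iii) is available, (iv) is immediate: density of $X$ in $Y$ gives $Y\subseteq\overline{X}$, whence $\dim Y \le \dim\overline{X}=\dim X\le\dim Y$ forces $\dim X=\dim Y$, and $Y\setminus X\subseteq\overline{X}\setminus X$ yields $\dim(Y\setminus X)<\dim X=\dim Y$.

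It remains to treat (vi) and (vii). For (vi) I would pass to the graph $\Gamma\subseteq\mathbb{R}^{n+m}$ of $f$: the projection onto the first $n$ coordinates is a definable bijection $\Gamma\to X$, so $\dim\Gamma=\dim X$ (dimension being a definable-bijection invariant), while $f(X)$ is the image of $\Gamma$ under the coordinate projection onto $\mathbb{R}^m$; since coordinate projections do not increase dimension (a standard consequence of cell decomposition, as a $d$-cell projects into a union of cells of dimension $\le d$), we get $\dim f(X)\le\dim\Gamma=\dim X$. Finally (vii) again leans on (iii): taking a $C^1$ cell decomposition of $X$ with $d=\dim X$, every point of a top cell $C$ (with $\dim C=d$) that lies in no $\overline{C''}$ for a distinct cell $C''$ is generic, so the non-generic locus is contained in the union of the cells of dimension $<d$ together with the sets $C\cap\overline{C''}$; each such $C\cap\overline{C''}$ has dimension $<d$, because if $\dim C''<d$ then $\dim\overline{C''}<d$, and if $\dim C''=d$ then $C\cap\overline{C''}\subseteq\overline{C''}\setminus C''$ has dimension $<d$ by (iii). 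By (v) the whole non-generic set, which is clearly definable, thus has dimension $<\dim X$. All of these are classical facts of o-minimal geometry, and I would cite \cite{Coste2000, Dries1998} for the detailed verifications of the cell-level estimates.
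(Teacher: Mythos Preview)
The paper does not supply a proof of this lemma; it is recorded as standard background in o-minimal geometry, with the surrounding text pointing to \cite{Coste2000, Dries1998, Dries1996}. Your proposal is a correct sketch of the usual cell-decomposition arguments one finds in those references, so in effect you have reconstructed what the paper simply cites.
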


\begin{lemma}\label{DiffrentiableLemma}
Let $X \subset \mathbb{R}^n$ be a definable open set and $f \colon X \to \mathbb R^m$ be a definable mapping. Then 
for each positive integer $p,$ the set of points where $f$ is not of class $C^p$ is a definable set of dimension less than $n.$
\end{lemma}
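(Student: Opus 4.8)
The plan is to reduce to the scalar case and then combine a first-order description of the $C^p$-locus with the $C^p$-cell decomposition theorem. Writing $f = (f_1, \dots, f_m)$, the map $f$ is of class $C^p$ at a point precisely when each coordinate $f_i$ is, so the set where $f$ is not $C^p$ is the union of the corresponding sets for the $f_i$; by Lemma~\ref{DimensionLemma}(v) it therefore suffices to treat a single definable function $f \colon X \to \mathbb{R}$.

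First I would check that the set in question is genuinely definable, so that speaking of its dimension is meaningful. Differentiability of $f$ at a point, and continuity of a definable function at a point, are both expressible by first-order formulas in the language of the structure (an existential quantifier over the candidate derivative together with the usual $\varepsilon$--$\delta$ clauses), so the differentiability locus is definable and the gradient $x \mapsto \nabla f(x)$ is a definable map on it. Iterating this observation, the higher-order partial derivatives are definable functions wherever they exist, and the condition ``$f$ is $C^p$ on some neighborhood of $x$'' is again first-order; hence the set $G$ of such points is definable (and open), and its complement $\Sigma := X \setminus G$, the set where $f$ is not of class $C^p$, is definable.

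For the dimension bound I would invoke the $C^p$-cell decomposition theorem applied to $X$ and adapted to $f$: this yields a partition of $X$ into finitely many definable $C^p$-cells on each of which the restriction of $f$ is of class $C^p$. Let $U$ be the union of the open ($n$-dimensional) cells. Each such cell is an open subset of $\mathbb{R}^n$ on which $f$ is $C^p$, so every point of $U$ has a neighborhood on which $f$ is $C^p$; thus $U \subseteq G$ and $f$ is of class $C^p$ on $U$. The remaining cells all have dimension at most $n-1$, so $\dim (X \setminus U) \le n - 1 < n$. Since $\Sigma = X \setminus G \subseteq X \setminus U$, monotonicity of dimension (a consequence of Lemma~\ref{DimensionLemma}(v), applied to $\Sigma \cup (X \setminus U) = X \setminus U$) gives $\dim \Sigma \le \dim(X \setminus U) < n$, as required.

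The routine ingredients are the reduction to $m = 1$ and the dimension monotonicity; the two substantive points are the definability of $\Sigma$ (resting on the first-order expressibility of differentiability and continuity) and, above all, the $C^p$-cell decomposition theorem, whose guarantee that $f$ becomes $C^p$ after removing only a set of dimension $< n$ is the heart of the matter. I expect the main obstacle to be the honest verification that ``$C^p$ in a neighborhood'' is a first-order condition, so that $\Sigma$ is definable; once that is in place, the cell decomposition delivers the dimension estimate almost immediately.
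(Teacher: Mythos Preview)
Your argument is correct. Note, however, that the paper does not actually supply a proof of this lemma: it is stated as one of the standard background facts from o-minimal geometry, with the reader referred to \cite{Coste2000, Dries1998, Dries1996}. Your proposal is precisely the standard argument one finds in those references --- first-order expressibility of the $C^p$-locus for definability, and the $C^p$-cell decomposition theorem for the dimension bound --- so there is nothing to compare beyond observing that you have reconstructed the textbook proof the paper chose to cite rather than reproduce.
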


In the sequel we will make use of Hardt's triviality (see \cite{Hardt1980, Dries1998}).

\begin{theorem}[Hardt's triviality] \label{HardtTheorem}
Consider a definable continuous mapping $f \colon X \rightarrow Y$ where $X \subset \mathbb{R}^n$ and $Y \subset \mathbb{R}^m$ are definable sets. Then there exists a finite partition $Y  = Y_1 \cup \cdots \cup Y_k$ of $Y$ into definable sets $Y_i$ such that $f$ is definably trivial over each $Y_i,$ that is, there exists a definable set $F_i \subset \mathbb{R}^{n_i},$ for some $n_i,$ and a definable homeomorphism $h_i \colon f^{-1} (Y_i) \rightarrow Y_i \times F_i$ such that the composition $h_i$ with the projection $Y_i \times F_i \rightarrow Y_i, (y, z) \mapsto y,$  is equal to the restriction of $f$ to $f^{-1} (Y_i).$
\end{theorem}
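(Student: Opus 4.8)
The plan is to reduce Hardt's theorem to a statement about coordinate projections and then induct on the dimension of the fibers. First I would replace $f$ by a projection: the map $x \mapsto (f(x), x)$ is a definable homeomorphism of $X$ onto the graph $\Gamma := \{(y, x) \in Y \times \mathbb{R}^n : y = f(x)\} \subset \mathbb{R}^m \times \mathbb{R}^n$, and under this identification $f$ becomes the restriction to $\Gamma$ of the projection $\pi \colon \mathbb{R}^m \times \mathbb{R}^n \to \mathbb{R}^m$, $(y, x) \mapsto y$. A trivialization of $\pi|_\Gamma$ over a finite definable partition of $Y$ yields, by composing the relevant definable homeomorphisms, a trivialization of $f$ over the same partition. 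Hence it suffices to prove the following: for any definable $A \subset \mathbb{R}^m \times \mathbb{R}^n$, the projection $\pi|_A$ is definably trivial over each piece of a suitable finite definable partition of $\pi(A) \subset \mathbb{R}^m$.

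I would establish this by induction on $n$. For the base case $n = 1$, each fiber $A_y = \{t \in \mathbb{R} : (y, t) \in A\}$ is, by o-minimality, a finite union of points and open intervals. Using cell decomposition of $A$ together with definable choice, the endpoints of these intervals are described by finitely many definable functions $\xi_1(y) < \cdots < \xi_N(y)$ with values in $\mathbb{R} \cup \{\pm\infty\}$; after refining the partition of $\pi(A)$ via Lemma~\ref{MonotonicityLemma} and Lemma~\ref{DimensionLemma}, I may assume that on each piece $S$ the integer $N$ is constant and each $\xi_i$ is continuous. Over such an $S$ I then build an explicit trivialization: fixing a base point $y_0 \in S$ and setting $F := A_{y_0}$, I define a fiber-preserving homeomorphism of $S \times \mathbb{R}$ that is affine in $t$ on each bounded interval $(\xi_i(y), \xi_{i+1}(y))$, a suitable translation on the two unbounded end intervals, and carries $\xi_i(y)$ to the fixed value $\xi_i(y_0)$. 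This map is definable and continuous and sends $A \cap (S \times \mathbb{R})$ onto $S \times F$, which furnishes the required homeomorphism.

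For the inductive step I would split off the last coordinate, writing $\mathbb{R}^m \times \mathbb{R}^n = \mathbb{R}^{m+n-1} \times \mathbb{R}$. Applying the case $n = 1$ to the projection $A \to \mathbb{R}^{m+n-1}$ produces a finite definable partition $\mathbb{R}^{m+n-1} = \bigcup_j B_j$ and a definable trivialization of $A$ in the last variable over each $B_j$. I then invoke the inductive hypothesis on each projection $B_j \to \mathbb{R}^m$ to obtain a common finite partition of $\pi(A)$ over which all the $B_j$ are simultaneously trivial, and finally lift and compose: the last-coordinate trivialization over $B_j$ is transported by the base trivialization of $B_j$ to produce, over each final piece $S$, a single definable homeomorphism of $\pi^{-1}(S) \cap A$ with $S \times F$.

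The main obstacle is precisely this splicing in the inductive step. The homeomorphism trivializing the last coordinate lives over $B_j \subset \mathbb{R}^{m+n-1}$, whereas the inductive hypothesis trivializes $B_j$ over the base $\mathbb{R}^m$, and one must compose these so that the resulting map is at once a homeomorphism, fiber-preserving over $\mathbb{R}^m$, definable, and mutually compatible across the several pieces $B_j$ sitting over a common base piece. Keeping all these partial trivializations coherent, and ensuring that they respect the subset $A$ rather than merely the ambient product structure, is the technical heart of the argument; this is where definable choice and careful bookkeeping of the cell structure are indispensable.
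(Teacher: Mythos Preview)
The paper does not prove Hardt's triviality at all: Theorem~\ref{HardtTheorem} is stated with references to \cite{Hardt1980, Dries1998} and then used as a black box in the proofs of Lemma~\ref{Lemma33} and Theorem~\ref{MainTheorem}. So there is no ``paper's own proof'' to compare your proposal against.

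That said, your outline is the standard route to the o-minimal version of the theorem (essentially the argument in \cite[Chapter~9]{Dries1998}): pass to the graph to reduce to a coordinate projection, handle one real variable at a time using the fact that fibers in $\mathbb{R}$ are finite unions of points and intervals with definably varying endpoints, and then induct. Your description of the base case is accurate; for the piecewise-affine trivialization you will also need to partition the base so that the combinatorial type of the fiber (which endpoints are present, which intervals belong to $A$) is constant, not merely the number $N$ of boundary functions. You correctly identify the genuine difficulty as the inductive splice: composing the last-coordinate trivialization over $B_j$ with the trivialization of $B_j$ over the base, coherently across the various $B_j$ lying over a common piece. In the standard treatment this is handled by strengthening the inductive statement to a trivialization \emph{compatible with a given finite family of definable subsets} (so that the partition into the $B_j$ is itself respected by the lower-dimensional trivialization), which then makes the gluing automatic. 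Without that strengthening your inductive step, as written, does not quite close; adding the ``compatible with subsets'' clause to the induction hypothesis is the missing ingredient.
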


\section{Results and proofs} \label{Section3}

The following result provides necessary and sufficient conditions for a definable continuous mapping to be open.
For related results, we refer the reader to \cite{Chernavski1964, Chernavski1965, Church1960, Church1963, Church1967, Church1978, Denkowski2017, Gamboa1996, Hirsch2002, Titus1952, Vaisala1966}.
 
\begin{theorem}\label{MainTheorem} , 
Let ${f} \colon \Omega \to \mathbb{R}^n$ be a definable continuous mapping, where $\Omega$ is a definable connected open set in $\mathbb R^n.$
Then the following two conditions are equivalent:
\begin{enumerate}[{\rm (i)}]
\item The mapping ${f}$ is open.
\item The fibers of ${f}$ are finite and the Jacobian $Jf$ does not change sign on $D_{f}.$ 
\item The fibers of ${f}$ are finite and the Jacobian $Jf$ does not change sign on $R_{f}.$
\item The fibers of ${f}$ are finite and the branch set $B_f$ has dimension at most $n - 2.$
\end{enumerate}
\end{theorem}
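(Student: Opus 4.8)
The plan is to prove all four statements equivalent by establishing the cycle (ii) $\Rightarrow$ (i) $\Rightarrow$ (iv) $\Rightarrow$ (iii) $\Rightarrow$ (ii) (the trivial inclusion $R_f \subseteq D_f$ also shows (ii) $\Rightarrow$ (iii) directly, as a sanity check). Three cheap preliminary observations would be set up first and used repeatedly. By definition $f$ is a local homeomorphism, hence open, on the closed-complement open set $\Omega \setminus B_f$, and since finiteness of the fibres makes generic fibres $0$-dimensional, the regular set $R_f$ is dense and $B_f$ has empty interior. The set of ``bad values'' $f(B_f)\cup f(\{x : f\text{ is }C^1\text{ near }x,\ Jf(x)=0\})\cup f(\Omega\setminus D_f)$ is definable of dimension at most $n-1$ by Lemmas~\ref{DimensionLemma} and \ref{DiffrentiableLemma}, so regular values are dense. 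Finally, finiteness of the fibres lets me isolate any $x_0$ in its fibre by a small ball $B$ with $f(x_0)\notin f(\partial B)$, so that $\deg(f,B,f(x_0))$ is defined.

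For (ii) $\Rightarrow$ (i) I would fix $x_0$, isolate it in its fibre by such a ball $B$, and show $\deg(f,B,f(x_0))\neq 0$; local constancy of the degree (Lemma~\ref{Brouwer2}(ii)) then forces $f(B)$ to contain a neighbourhood of $f(x_0)$, i.e. openness at $x_0$. Since $f|_{B\setminus B_f}$ is a local homeomorphism and $B_f$ is nowhere dense, $f(x_0)$ lies in the closure of the open set $f(B\setminus B_f)$; choosing a regular value $y\in f(B\setminus B_f)$ close to $f(x_0)$, the summation formula (Lemma~\ref{Brouwer3}(ii)) gives $\deg(f,B,y)=\sum_{x\in f^{-1}(y)\cap B}\mathrm{sign}\,Jf(x)$, which by the no-sign-change hypothesis equals $\pm\#(f^{-1}(y)\cap B)\neq 0$. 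As $y$ is close to $f(x_0)$, Lemma~\ref{Brouwer2}(ii) yields $\deg(f,B,f(x_0))=\deg(f,B,y)\neq 0$.

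The two clean reverse links go as follows. For (iii) $\Rightarrow$ (ii): given $x\in D_f$ with $Jf(x)\neq 0$, Lemma~\ref{Brouwer3}(i) provides a small $W$ with $\deg(f,W,f(x))=\mathrm{sign}\,Jf(x)$; evaluating the degree instead at a nearby regular value $y$ and invoking the no-sign-change hypothesis on $R_f$ gives $\mathrm{sign}\,Jf(x)=\sigma\,\#(f^{-1}(y)\cap W)$, where $\sigma$ is the common sign on $R_f$, forcing $\mathrm{sign}\,Jf(x)=\sigma$. For (iv) $\Rightarrow$ (iii) I would use that $B_f$ is closed with $\dim B_f\le n-2$, so $\Omega\setminus B_f$ is open and connected; joining two points of $R_f$ by a path in $\Omega\setminus B_f$ (Lemma~\ref{PathConnectedness}) and noting that along such a path $f$ is a local homeomorphism with local degree $\pm1$, homotopy invariance of the degree (Lemma~\ref{Brouwer2}(iii)) shows that this local degree is constant, whence $\mathrm{sign}\,Jf$ cannot change sign on $R_f$.

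The substantial implication is (i) $\Rightarrow$ (iv). Openness must first be shown to force finite fibres: by Hardt's triviality (Theorem~\ref{HardtTheorem}) the generic fibre is finite, and a positive-dimensional fibre over a set of full dimension would give an image of dimension at most $n-1$, contradicting that $f(\Omega)$ is open; excluding even a single positive-dimensional fibre component is where I expect to need a degree or invariance-of-domain argument (Lemma~\ref{Brouwer1}). Granting finiteness, I would bound $\dim B_f$ by showing that $f$ is a local homeomorphism at a generic point $x_0$ of $B_f$, so that $B_f$ cannot reach dimension $n-1$. Where $x_0$ lies in the $C^1$-locus this is a normal-form computation: the derivative has rank $n-1$, so in suitable local $C^1$ coordinates $f$ reads $(x_1,\dots,x_n)\mapsto(g(x),x_2,\dots,x_n)$, and openness forces $x_1\mapsto g$ to be strictly monotone—strictness coming from finiteness of fibres through the Monotonicity Lemma~\ref{MonotonicityLemma}—whence $f$ is locally injective. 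The genuine obstacle, and the point where non-smoothness bites, is the set $N$ of points at which $f$ fails to be $C^1$: by Lemma~\ref{DiffrentiableLemma} this set can have dimension $n-1$, and one must rule out that $f$ \emph{creases and folds} along it. This is precisely the codimension-one phenomenon controlled classically by the Chernavskii--V\"ais\"al\"a theorem that the branch set of a discrete open map has dimension at most $n-2$; I would either adapt that topological argument to the definable category or, transversally to a generic point of $N$, reduce to a one-dimensional statement about definable monotone functions and use openness to exclude a fold. Closing this codimension-one analysis across the non-differentiability locus is where I expect the main difficulty to lie.
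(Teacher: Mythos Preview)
Your degree-theoretic implications are correct and match the paper closely: your (ii)~$\Rightarrow$~(i) is the paper's (iii)~$\Rightarrow$~(i), your (iv)~$\Rightarrow$~(iii) is identical to the paper's, and your (iii)~$\Rightarrow$~(ii) is exactly the final paragraph of the paper's (i)~$\Rightarrow$~(ii). Your cycle is in fact more economical than the paper's logical scheme: by routing through (i)~$\Rightarrow$~(iv)~$\Rightarrow$~(iii)~$\Rightarrow$~(ii) you never need to prove directly that $Jf$ has constant sign on~$R_f$, which in the paper is the longest argument and requires Lemma~\ref{Lemma31} (the graph-connectedness of the components of~$R_f$).

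The substantive divergence is in (i)~$\Rightarrow$~(iv). The paper does \emph{not} invoke \v{C}ernavski\u{\i}--V\"ais\"al\"a, nor does it split into a $C^1$-locus case and a non-$C^1$-locus case. Instead, assuming $\dim B_f=n-1$, it works at a generic point of~$B_f$ and performs a sequence of definable coordinate changes (the same ones set up in the proof of (i)~$\Rightarrow$~(ii)): straighten $B_f$ into $\{x_n=0\}$ and $f(B_f)$ into $\{y_n=0\}$; use openness plus monotonicity to see that $t\mapsto f_n(x',t)$ is strictly increasing, and compose with the resulting homeomorphism $\Psi(x',x_n)=(x',f_n(x',x_n))$ so that one may take $f_n\equiv x_n$; then the induced map $g(x')=(f_1(x',0),\dots,f_{n-1}(x',0))$ is definable, open, with finite fibres, and by Lemma~\ref{Lemma32} the one-sided limits of $\partial f_i/\partial x_j$ across $B_f$ agree with $\partial g_i/\partial x_j$. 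At a point where $Jg\neq 0$ (dense by Lemma~\ref{Lemma33}), a mean-value-theorem argument applied to pairs $x^k\neq y^k$ with $f(x^k)=f(y^k)$ converging to that point produces a nonzero vector in $\ker dg$, a contradiction. This handles the $C^1$ and non-$C^1$ portions of $B_f$ uniformly; Lemma~\ref{Lemma32} is precisely the device that lets the argument pass through the non-differentiability locus.

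Two remarks on your sketch. First, your assertion that at a generic $C^1$ point of $B_f$ the derivative has rank exactly $n-1$ is not free: you need that $f|_{B_f}$ is generically an immersion, which follows from finite fibres via Lemma~\ref{Lemma33}(i) and generic smoothness, but should be said. Second, citing \v{C}ernavski\u{\i}--V\"ais\"al\"a for the non-$C^1$ part would work (definable dimension agrees with topological dimension) and would give a shorter proof than the paper's, at the cost of importing a substantial theorem from geometric topology; the paper's argument is longer but stays entirely inside the definable/degree-theoretic toolkit it has already developed. For the finiteness of fibres under openness, the paper, like you, does not argue from scratch but cites \cite[Theorem~3.10]{Denkowski2017} and \cite{Gamboa1996}.
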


In order to prove Theorem~\ref{MainTheorem}, we need some lemmas. 
The first one is \cite[Fact~2.2]{Peterzil2007}, which is a consequence of \cite[Proposition~2]{Johns2001}; we recall it here together with a direct proof.

\begin{lemma}  \label{Lemma31}
Let $\Omega$ be a definable connected open subset of $\mathbb{R}^n$ and $R$ be a definable  open dense subset of $\Omega.$ Consider the graph $\Gamma$ whose vertices are the connected components of $R$ with two components $R_i$ and $R_j$ connected by an edge if and only if 
$\dim (\Omega \cap \overline{R}_i\cap \overline{R}_j)\geqslant n - 1.$ Then  $\Gamma$ is connected.
\end{lemma}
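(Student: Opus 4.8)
The plan is to argue by contradiction: assume $\Gamma$ is disconnected and produce a disconnection of $\Omega$ itself. Since $R$ is a definable open set, by Lemma~\ref{PathConnectedness}(i) it has finitely many connected components $R_1, \dots, R_m$, each definable and open, so $\Gamma$ is a finite graph. If $\Gamma$ were disconnected, its vertex set would split into two nonempty parts $A$ and $B$ with no edge between them. Set $U := \bigcup_{R_i \in A} R_i$ and $V := \bigcup_{R_j \in B} R_j$; these are disjoint nonempty definable open sets with $U \cup V = R$.

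The first key step is to control the dimension of the overlap $S := \overline{U} \cap \overline{V} \cap \Omega$. Because the unions are finite, $\overline{U} = \bigcup_{R_i \in A}\overline{R}_i$ and $\overline{V} = \bigcup_{R_j \in B}\overline{R}_j$, so $S = \bigcup_{R_i \in A,\, R_j \in B}\bigl(\Omega \cap \overline{R}_i \cap \overline{R}_j\bigr)$. Since there is no edge between $A$ and $B$, each term has dimension at most $n-2$, whence $\dim S \le n-2$ by Lemma~\ref{DimensionLemma}(v). Note that $S$ is relatively closed in $\Omega$, so $\Omega' := \Omega \setminus S$ is open, and that $\Omega \subseteq \overline{R} = \overline{U}\cup\overline{V}$ because $R$ is dense in $\Omega$.

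I would then exhibit a disconnection of $\Omega'$. Put $P := \overline{U}\cap\Omega'$ and $Q := \overline{V}\cap\Omega'$; these are relatively closed in $\Omega'$, cover $\Omega'$ (as $\Omega' \subseteq \overline{U}\cup\overline{V}$), and are disjoint (as $P\cap Q = S\cap\Omega' = \emptyset$), hence both are relatively clopen. Each is nonempty: if, say, $U \setminus \overline{V}$ were empty, then the nonempty open set $U$ would lie in $\overline{V}\setminus V$, which is impossible since $\dim(\overline{V}\setminus V) < n$ by Lemma~\ref{DimensionLemma}(iii) while $\dim U = n$. Thus $\Omega'$ is disconnected.

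To reach a contradiction it remains to prove that $\Omega' = \Omega \setminus S$ is \emph{connected}; this is the main obstacle, and it amounts to the statement that a definable set of dimension at most $n-2$ cannot disconnect a connected open set. I would prove it by a definable path-perturbation argument. Given $a, b \in \Omega'$, Lemma~\ref{PathConnectedness}(ii) yields a definable path $\gamma\colon[0,1]\to\Omega$ from $a$ to $b$; choosing a closed tube around the compact set $\gamma([0,1])$ that still lies in $\Omega$ together with a definable bump function $\phi$ satisfying $\phi(0)=\phi(1)=0$ and $\phi>0$ on $(0,1)$, I consider $\Phi(t,v) := \gamma(t)+\phi(t)v$ for $v$ in a small ball $\mathbb{B}$. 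For each fixed $t\in(0,1)$ the slice $\{v : \Phi(t,v)\in S\}$ is an affine image of $S$ and so has dimension at most $n-2$, whence the definable set $W := \Phi^{-1}(S)$ has dimension at most $n-1$, and its projection to $\mathbb{B}$ cannot be all of $\mathbb{B}$ by Lemma~\ref{DimensionLemma}(ii),(vi). Selecting $v$ outside this projection and small enough to keep the perturbed path inside $\Omega$ gives a definable path from $a$ to $b$ avoiding $S$, so $\Omega'$ is path connected. This contradicts the previous paragraph and proves that $\Gamma$ is connected. The delicate points to handle carefully are the bound $\dim W \le n-1$, which rests on the standard bound for the dimension of a definable family, and the choice of $v$ keeping $\Phi(\cdot,v)$ inside $\Omega$.
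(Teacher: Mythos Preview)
Your argument is correct and follows essentially the same route as the paper: assume $\Gamma$ is disconnected, group the components into two nonempty parts, show that the intersection $S=\Omega\cap\overline U\cap\overline V$ has dimension at most $n-2$, and obtain a contradiction because $\Omega\setminus S$ is simultaneously connected and separated by $\overline U$ and $\overline V$. The only difference is cosmetic: the paper simply asserts that $\Omega\setminus S$ is path connected and then reaches the contradiction via a $\sup$-argument along a path, whereas you supply a path-perturbation proof of that connectedness and phrase the separation via a clopen partition---both are equivalent formulations of the same idea.
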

\begin{proof} 
Suppose for contradiction that the graph $\Gamma$ is not connected. There exists a connected component of $\Gamma$ whose vertices are $R_1, \ldots, R_p$ for some $p \geqslant 1,$ and let $R_{p + 1}, \ldots, R_{q}$ with $q \geqslant p + 1$ be the remaining vertices of $\Gamma.$
By assumption, we have for all $i = 1, \ldots, p,$ and all $j = p + 1, \ldots, q,$
$$\dim(\Omega \cap \overline R_i\cap\overline R_j) < n - 1.$$

Set 
$$X_1:= \bigcup_{i=1}^p (\Omega \cap \overline R_i) \quad \text{ and } \quad  X_2 := \bigcup_{j=p+1}^q (\Omega \cap \overline R_j).$$
Observe that
$$X_1\cap X_2 = \left(\bigcup_{i=1}^p (\Omega \cap \overline R_i) \right)\bigcap\left(\bigcup_{j=p+1}^q (\Omega \cap  \overline R_j)\right)=\bigcup_{\substack{i=1,\dots,p\\ j=p + 1,\dots,q}} (\Omega \cap  \overline R_i\cap \overline R_j).$$
Hence $\dim (X_1\cap X_2) < n - 1$ and so $\Omega \setminus(X_1\cap X_2)$ is path connected. On the other hand, since $R = \cup_{i = 1, \ldots, q} R_i$ is dense in $\Omega,$ we have $\Omega = X_1 \cup X_2$ and so
$$\Omega\setminus(X_1\cap X_2) = (X_1 \setminus X_2) \cup (X_2 \setminus X_1).$$
Therefore, there is a continuous path
$$\gamma\colon[0,1]\to\Omega\setminus(X_1\cap X_2)$$ 
such that $\gamma(0)\in X_1 \setminus X_2$ and $\gamma(1)\in X_2 \setminus X_1.$ Set 
$$t_* :=\sup \{t \in[0, 1]:\ \gamma(s) \in X_1 \setminus X_2 \ \textrm{ for all } \ s \in [0, t) \}.$$
Then it is easy to check that $\gamma(t_*) \in X_1\cap X_2$, which is a contradiction.
The lemma is proved.
\end{proof}

The following result is taken from \cite[Lemma~3.1]{Peterzil2007}.

\begin{lemma} \label{Lemma32}
Let $W\subset \mathbb R^{n-1}$ and $U \subset\mathbb R^n$ be definable open sets such that the set $\widetilde W : = W \times \{0\}$ is contained in the boundary of $U.$ Let $f \colon U \cup \widetilde W\to \mathbb R$ be a definable function continuous on $U \cup \widetilde W$ and $C^1$ on $U.$
Let $g \colon W \to \mathbb R$ be the function $y \mapsto f(y, 0)$ and $a \in W$ be a generic point. 
Then $g$ is differentiable at $a$ and, for all $i = 1,\dots, n - 1,$
$$\frac{\partial g}{\partial x_i}(a)\ =\ \lim_{x \to (a, 0),\, x \in U}\frac{\partial f}{\partial x_i}(x).$$
\end{lemma}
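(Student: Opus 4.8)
The plan is to combine the one-dimensional Mean Value Theorem, applied to the $C^1$ function $f$ on horizontal slices of $U$, with o-minimal genericity to control the boundary behaviour of the partial derivatives $h_i := \partial f/\partial x_i$. Throughout, ``generic $a$'' means $a$ lying outside a definable subset of $W$ of dimension $< n-1$; since $W$ is open in $\mathbb R^{n-1}$, I am free to discard such lower-dimensional obstructions as they arise.

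First I would fix the local geometry. Since $\widetilde W = W\times\{0\}$ is an $(n-1)$-dimensional subset of $\partial U$ and $\dim\partial U\le n-1$, a cell decomposition adapted to $U$ and to the hyperplane $\{x_n=0\}$ shows that for generic $a$ there is a box $V\times(-\delta,\delta)$ about $(a,0)$, with $V\subset\mathbb R^{n-1}$ a neighbourhood of $a$, on which $U$ is one-sided: after possibly reflecting the last coordinate, $V\times(0,\delta)\subseteq U$. (If $U$ meets both sides the two halves are treated symmetrically, and the one-sided limits are forced to agree by the computation below.) Applying Lemma~\ref{DiffrentiableLemma} to the definable function $g$ on the open set $W$, I may also assume $g$ is $C^1$ on $V$.

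Next comes the core computation. For $y,\,y+se_i\in V$ and $t\in(0,\delta)$ the segment $\{(y+\sigma e_i,t):\sigma\in[0,s]\}$ lies in $U$, so the $C^1$ regularity of $f$ on $U$ gives
$$f(y+se_i,t)-f(y,t)=\int_0^s h_i(y+\sigma e_i,t)\,d\sigma.$$
Letting $t\to0^+$ and using continuity of $f$ on $U\cup\widetilde W$, the left-hand side tends to $g(y+se_i)-g(y)$. To pass the limit through the integral I must show that, for generic $y$, the boundary limit $L_i(y):=\lim_{x\to(y,0),\,x\in U}h_i(x)$ exists, is finite, and is a continuous definable function of $y$. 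Granting this, local boundedness justifies the interchange and yields $g(y+se_i)-g(y)=\int_0^s L_i(y+\sigma e_i)\,d\sigma$; dividing by $s$ and letting $s\to0$ gives $\frac{\partial g}{\partial x_i}(a)=L_i(a)$, the claimed formula. Differentiability of $g$ at $a$ then follows, since its partials $L_i$ are continuous at the generic point $a$.

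The main obstacle is exactly the existence and single-valuedness of $L_i$ at generic $a$, and this is where o-minimality is indispensable. Consider the frontier $E_i:=\overline{\operatorname{graph}h_i}\cap(\widetilde W\times\overline{\mathbb R})$; by Lemma~\ref{DimensionLemma}(iii) it has dimension $<\dim(\operatorname{graph}h_i)=n$, hence $\dim E_i\le n-1$, so by the fibre-dimension estimate its fibre over generic $a$ is finite. The value $\pm\infty$ is excluded because the \emph{tangential} difference quotients of the boundary-continuous function $f$ stay bounded, in sharp contrast to the normal direction $i=n$, which is deliberately omitted. It remains to rule out two distinct finite limit values over an $(n-1)$-dimensional set of base points; this is the delicate step, and I would settle it with curve selection and Lemma~\ref{MonotonicityLemma} together with the Mean Value identity above, which forces every finite vertical limit of $h_i$ to equal $\frac{\partial g}{\partial x_i}(a)$ and hence to be unique. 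Once $L_i$ is a well-defined definable function on a generic subset of $W$, Lemma~\ref{DiffrentiableLemma} renders it continuous there, completing the argument.
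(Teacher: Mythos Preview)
The paper gives no proof of this lemma; it is quoted verbatim from \cite[Lemma~3.1]{Peterzil2007}, so there is nothing in the paper to compare your argument against.

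As for the sketch itself, the overall strategy---straightening $U$ to a half-box $V\times(0,\delta)$ near a generic boundary point, writing
\[
f(y+se_i,t)-f(y,t)=\int_0^s h_i(y+\sigma e_i,t)\,d\sigma,
\]
and letting $t\to 0$---is sound and is essentially the approach of Peterzil--Starchenko. Two soft spots deserve tightening. First, your exclusion of $\pm\infty$ is too quick: boundedness of the tangential difference quotients of $f$ only controls \emph{averages} of $h_i$ over segments, not $h_i$ pointwise. A clean fix is: if $L_i=+\infty$ on an open subset $V''\subset W$, then for a compact segment $[a,a+se_i]\subset V''$ the definable positive function $\sigma\mapsto\sup\{t:h_i(a+\sigma e_i,s)>M\text{ for all }s<t\}$ has a positive infimum for every $M$, so $\int_0^s h_i(a+\sigma e_i,t)\,d\sigma\to+\infty$, contradicting the finiteness of $g(a+se_i)-g(a)$. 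Second, the uniqueness of the finite cluster value is simpler than your proposed route through the Mean Value identity (which, as written, is slightly circular since it presupposes the limit you are trying to establish): after your reduction, $U\cap B_\epsilon((a,0))$ is \emph{connected} for small $\epsilon$, so the cluster set of the continuous function $h_i$ at $(a,0)$ is an interval in $\overline{\mathbb R}$; your frontier-dimension argument already makes it finite for generic $a$, hence it is a singleton.
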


The following fact is simple but useful.

\begin{lemma} \label{Lemma33}
Let ${f} \colon \Omega \to \mathbb{R}^n$ be a definable continuous mapping, where $\Omega$ is a definable open set in $\mathbb R^n.$
Assume that the fibers of $f$ are finite. Then the following two statements hold:
\begin{enumerate}[{\rm (i)}]
\item If $X$ is a definable subset of $\Omega,$ then $\dim X = \dim f(X).$
\item The set $R_f$ is dense in $\Omega.$
\end{enumerate}
\end{lemma}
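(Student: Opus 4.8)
The plan is to prove the two statements in turn, with statement~(ii) relying on statement~(i).

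For (i), the inequality $\dim f(X) \le \dim X$ is already furnished by Lemma~\ref{DimensionLemma}(vi), so the task reduces to the reverse inequality $\dim X \le \dim f(X)$. Here I would invoke Hardt's triviality (Theorem~\ref{HardtTheorem}) applied to the restriction $f|_X \colon X \to f(X)$: this yields a finite partition $f(X) = Y_1 \cup \cdots \cup Y_k$ together with definable homeomorphisms $(f|_X)^{-1}(Y_i) \cong Y_i \times F_i$ compatible with $f$. Since the fibers of $f$ are finite, each fiber $\{y\} \times F_i$ over a point $y \in Y_i$ is finite and nonempty, so $F_i$ is a nonempty finite set and $\dim F_i = 0$. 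A definable homeomorphism preserves dimension and $\dim(Y_i \times F_i) = \dim Y_i + \dim F_i = \dim Y_i$, whence $\dim\big((f|_X)^{-1}(Y_i)\big) = \dim Y_i \le \dim f(X)$. As $X = \bigcup_i (f|_X)^{-1}(Y_i)$, Lemma~\ref{DimensionLemma}(v) gives $\dim X = \max_i \dim Y_i \le \dim f(X)$, and the two inequalities together yield the claim.

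For (ii), I would first use Lemma~\ref{DiffrentiableLemma} to see that the set $S$ of points at which $f$ fails to be of class $C^1$ is definable with $\dim S < n$; hence $\dim \overline{S} < n$ by Lemma~\ref{DimensionLemma}(iii), so $U := \Omega \setminus \overline{S}$ is an open dense subset of $\Omega$ on which $f$ is $C^1$. It then suffices to show that the open set $\{x \in U : Jf(x) \ne 0\} = R_f$ is dense in $U$. Suppose not; then there is a nonempty open set $V \subseteq U$ on which $f$ is $C^1$ and $Jf \equiv 0$. Since every point of $V$ is then a critical point of $f$, Sard's theorem forces $f(V)$ to have measure zero, so $\dim f(V) < n$ by Lemma~\ref{DimensionLemma}(i). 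On the other hand, $V$ is a nonempty open set, so $\dim V = n$, and statement~(i) applied to $X = V$ gives $\dim f(V) = \dim V = n$, a contradiction. Therefore $R_f$ is dense.

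The dimension bookkeeping via Lemma~\ref{DimensionLemma} is routine; the crux of the argument is the correct use of Hardt's triviality in (i), namely recognizing that finiteness of the fibers is exactly what makes each $F_i$ zero-dimensional and thus transfers the full dimension of $X$ to its image. The main obstacle I anticipate is the step in (ii) that an open set on which the Jacobian vanishes identically must have lower-dimensional image: I would handle this either by Sard's theorem as above or, staying within the o-minimal framework, by a cell decomposition of $V$ into pieces on which $df$ has constant rank $r < n$, using that the image of each such piece has dimension at most $r$. Either route then clashes with the equality $\dim f(V) = n$ supplied by (i).
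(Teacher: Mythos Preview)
Your proposal is correct and follows essentially the same route as the paper: part~(i) is handled via Hardt's triviality with the finite-fiber hypothesis forcing each $F_i$ to be zero-dimensional, and part~(ii) combines Sard's theorem on the $C^1$ locus with part~(i). The only cosmetic differences are that the paper argues (ii) directly (showing $\dim\{x:Jf(x)=0\}<n$ on the $C^1$ locus) rather than by contradiction, and it does not take the closure of the non-$C^1$ set since that set is already closed in $\Omega$.
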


\begin{proof}
(i) Let $X$ be a definable subset of $\Omega.$ Applying Hardt's Trivial Theorem~\ref{HardtTheorem} to the restriction mapping $f|_X$ of $f$ to $X,$
we obtain a finite definable partition of $f(X)$ onto $Y_1, \ldots, Y_k$ such that $f|_X$ is definably trivial over each $Y_i.$ Since the fibers of $f$ are finite, it follows that 
$$\dim (f|_X)^{-1}(Y_i)\ =\ \dim Y_i \quad \textrm{ for } \quad i = 1, \ldots, k.$$
Hence 
$$\dim X\ =\ \max_{i = 1, \ldots, k} \dim (f|_X)^{-1}(Y_i)\ =\ \max_{i = 1, \ldots, k} \dim Y_i\ =\ \dim f(X),$$
which yields (i).

(ii) Let $B$ be the set of points $x \in \Omega$ such that ${f}$ is not $C^1$ in a neighborhood of $x.$ Then $B$ is closed in $\Omega.$ Further,
in view of Lemma~\ref{DiffrentiableLemma}, $B$ is a definable set of dimension less than $n.$ Consider the definable set
\begin{eqnarray*}
{C} &:=& \{x \in \Omega \setminus B \ : \ J{f}(x) = 0 \}.
\end{eqnarray*}
Applying Sard's theorem to the $C^1$-mapping 
$$\Omega \setminus B \to \mathbb{R}^n, \quad x \mapsto {f}(x),$$ 
we have that ${f}({C})$ has measure zero, and so it has dimension less than $n$ in view of Lemma~\ref{DimensionLemma}(i). 
This, together with the statement~(i), implies that
$$\dim {C}\ =\ \dim f({C}) < n.$$
Consequently, $\dim (B \cup {C}) < n.$ 
Now (ii) follows immediately since $R_f  = \Omega \setminus (B \cup {C}).$
This ends the proof of the lemma.
\end{proof}

We are now in position to prove Theorem~\ref{MainTheorem}.

\begin{proof}[Proof of Theorem~\ref{MainTheorem}] (cf. \cite[Theorem~3.14]{Denkowski2017}, \cite[Theorem~3.2]{Peterzil2007}).

\medskip 
(ii) $\Rightarrow$ (iii): Obviously.

\medskip 
(iii) $\Rightarrow$ (i): 
To see that $f$ is open, take any $\overline{x} \in \Omega$ and let $W$ be a neighborhood of $\overline{x}.$ Since the fiber ${f}^{-1}({f}(\overline{x}))$ is finite, there exists an open ball $U$ centered at $\overline{x}$ with $\overline{U} \subset W$ such that $\overline{U} \cap {f}^{-1}({f}(\overline{x})) = \{\overline{x}\}.$ Since $\partial{U}$ is compact and ${f}(\overline{x}) \not \in {f}(\partial U),$ we can find an open ball $V \subset \mathbb{R}^n$ centered at ${f}(\overline{x})$ such that ${f}(\partial U) \cap V = \emptyset.$ 
Now the conclusion follows if we can show that ${f}(U) \supset V.$ 

Recall that $R_f$ denotes the set $R_f$ of points $x \in \Omega$ such that ${f}$ is of class $C^1$ on a neighborhood of $x$ and the Jacobian $Jf(x)$ is nonzero. In view of Lemma~\ref{Lemma33}(ii), $R_f$ is dense in $\Omega.$ Since $U$ is an open subset of $\Omega,$ 
there exists a point $u$ in $U \cap R_f.$ Then ${f}$ is of class $C^1$ on a neighborhood of $u$ and the Jacobian matrix of $f$ at $u$ is nonsingular. By the inverse mapping theorem, the image ${f}(U)$ must contain an open set $Y \subset V.$ 
On the other hand, it follows from Lemmas~\ref{Lemma33}~and~\ref{DimensionLemma}(iv) that
\begin{eqnarray*}
\dim f(\Omega \setminus R_f) &=& \dim(\Omega \setminus R_f) \ < \ n.
\end{eqnarray*}
So there must exist some point $\overline{y}$ in $Y$ with $f^{-1}(y) \subset R_f.$ 
Therefore, $U \cap f^{-1}(\overline{y})$ is nonempty and for every $x \in U \cap f^{-1}(\overline{y}),$ the Jacobian matrix $df(x)$ exists and is nonsingular. 
Furthermore, by construction, $\overline{y}$ is not in ${f}(\partial U).$ For simplicity of notation, we let ${g}$ stand for the restriction of $f$ to $\overline{U}.$ Then the Brouwer degree $\deg ({g}, U, \overline{y})$ is defined. In light of Lemma~\ref{Brouwer3}(ii), we have
\begin{eqnarray*}
\deg ({g}, U, \overline{y}) &=& \sum_{x \in {g}^{-1}(\overline{y})} \mathrm{sign} J{g}(x).
\end{eqnarray*}
On the other hand, by assumption, the Jacobian $Jf$ is positive (or negative) on $R_{f},$ and so
\begin{eqnarray*}
\sum_{x \in {g}^{-1}(\overline{y})} \mathrm{sign} J{g}(x) &\ne& 0.
\end{eqnarray*}
Therefore $\deg ({g}, U, \overline{y}) \ne 0.$ 

Finally, to prove that $V \subset {f}(U),$ take any $y \in V$ and consider the continuous curve 
$$\gamma \colon [0, 1] \to V, \quad t \mapsto (1 - t)y + t \overline{y},$$ 
connecting $y$ to $\overline{y}.$ Since ${f}(\partial U) \cap V = \emptyset,$ we have $\gamma(t) \not \in {g}(\partial U)$ for all $t \in [0, 1].$ This, together with Lemma~\ref{Brouwer2}(iii), implies that
$$\deg ({g}, U,  y) \ = \ \deg ({g}, U, \overline{y}) \ \ne \ 0.$$
By Lemma~\ref{Brouwer2}(i), ${g}(x) = y$ for some $x$ in $U,$ and this proves the openness of $f.$

\medskip 
(i) $\Rightarrow$ (ii):
Assume that ${f}$ is open. If $n = 1,$ then ${f}$ is strictly monotone and there is nothing to prove. So for the rest of the proof we assume that $n > 1.$ 

By \cite[Theorem~3.10]{Denkowski2017} (see also \cite[Proposition, page~298]{Gamboa1996}), the fibers of $f$ are finite.

We first show that the Jacobian $Jf$ has constant sign on $R_f$ which means that $Jf$ is positive (or negative) on $R_f.$ 

Obviously, the set $R_f$ is definable open, and according to Lemma~\ref{Lemma33}(ii), it is dense in $\Omega.$ By Lemma~\ref{PathConnectedness}, $R_f$ has a finite number of connected components, say ${R}_1, \ldots, {R}_k,$ and these components are path connected. By definition, $J{f}$ has constant sign on each ${R}_i.$
According to Lemma~\ref{Lemma31}, we need to show that for any two components ${R}_i, {R}_j$ with $\dim (\Omega \cap \overline{R}_i \cap \overline{R}_j) \geqslant n - 1,$ the sign of $J{f}$ on ${R}_i$ is the same as on ${R}_j.$ 
We consider two such components ${R}_i, {R}_j$ and assume that they are ${R}_1$ and ${R}_2.$ 
Let $\Sigma := \Omega \cap \overline{R}_1 \cap \overline{R}_2.$ In view of Lemma~\ref{DimensionLemma}(iii), it is easy to see that $\dim \Sigma = n - 1.$

Let $\overline{x}$ be a generic point in $\Sigma;$ then $\Sigma$ is a $C^1$-submanifold of $\mathbb{R}^n$ of dimension $n - 1$ near $\overline{x}.$ 
Hence, there is a definable connected open neighborhood $U \subset \Omega$ of $\overline{x}$ and a definable diffeomorphism $\Phi$ from $U$ onto an open subset of $\mathbb R^n$ such that $\Phi(U \cap \Sigma) \subset \{x_n = 0\}.$ 
Shrinking $U$ and composing $\Phi$ with the reflection with respect to the hyperplane $\{x_n = 0\}$ if necessary, we may assume that $\Phi(U \cap {R}_1) \subset \{x_n > 0\}$ and $\Phi(U \cap {R}_2) \subset \{x_n < 0\}.$

Since $\Phi$ is a diffeomorphism and $U$ is connected, the sign of $J\Phi$ is constant on $U.$ Furthermore ${f} \circ \Phi^{-1}$ is open on $\Phi(U).$ Hence we may replace $\Omega$ by $\Phi(U),$ ${f}$ by ${f} \circ \Phi^{-1}$ and assume that
$${R}_1 \subset \{x_n > 0\}, \quad {R}_2 \subset \{x_n < 0\} \quad \text{ and } \quad \Sigma \subset \{x_n = 0\}.$$

On the other hand, since the fibers of $f$ are finite, it follows from Lemma~\ref{Lemma33}(i) that
\begin{eqnarray*}
\dim f(\Sigma)  &=& \dim \Sigma \ = \ n - 1.
\end{eqnarray*}
Let $\overline{y}$ be a generic point in $f(\Sigma);$ then $f(\Sigma)$ is a $C^1$-submanifold of $\mathbb{R}^n$ of dimension $n - 1$ near $\overline{y}.$ As before, by applying an appropriate definable diffeomorphism on an open neighborhood of $\overline{y}$ we may assume that ${f}(\Sigma) \subset \{y_n = 0\}.$ 
Applying Hardt's Trivial Theorem~\ref{HardtTheorem} to the restriction of $f$ to $f^{-1}(f(\Sigma)),$
we obtain a finite definable partition of $f(\Sigma)$ onto $Y_1, \ldots, Y_k$ such that this restriction mapping is definably trivial over each $Y_i.$ 
By Lemma~\ref{PathConnectedness}, $f^{-1}(Y_i)$ has a finite number of connected components and each such component is homeomorphic 
to $Y_i$ because $f$ has finite fibers. Observe that there exists an index $i$ such that the set  $f^{-1}(Y_i) \cap \Sigma$ is of dimension $n - 1.$ Let $\widetilde{\Sigma}$ be a connected component of  $f^{-1}(Y_i) \cap \Sigma$ of dimension $n - 1.$ Now, by shrinking $\Omega$ we may assume that $\Omega \cap f^{-1}(f(\widetilde{\Sigma})) = \widetilde{\Sigma}.$

By construction, it is easy to see that each of the (connected open) sets $f(R_1)$ and $f(R_2)$ is contained in either $\{y_n > 0\}$ or $\{y_n < 0\}$ but not in both. Furthermore, since $f$ is open, the sets $f(R_1)$ and $f(R_2)$ cannot lie in the same half-space $\{y_n > 0\}$ or $\{y_n < 0\}.$ Hence, without lost of generality, we may assume that
$${f}({R}_1) \subset \{y_n > 0\} \quad \textrm{ and } \quad {f}({R}_2) \subset \{y_n < 0\}.$$

Write ${f} := ({f}_1, \dots, {f}_n)$ and let $(a, 0) \in \mathbb{R}^{n - 1} \times \{0\}$ be a generic point in $\widetilde{\Sigma}.$ Observe that
$$f_n(a, -t)\  <\ f_n(a, 0)\ =\ 0\ <\ f_n(a, t)$$
for all $t > 0$ sufficiently small. By Lemma~\ref{MonotonicityLemma}, it is easy to see that ${f}_n(a, t)$ is strictly increasing in $t$ near $0.$ Consequently, we can find $\epsilon > 0$ small enough such that $\frac{\partial {f}_n}{\partial x_n}(a, t) > 0$ for all $t \in (-\epsilon, \epsilon)$ different from $0.$  Since $(a, 0)$ is generic in $\widetilde{\Sigma},$ we may assume, by shrinking $\Omega$ if needed, that for all $(x_1,\dots, x_n) \in \Omega,$ if $x_n \ne 0,$ then $\frac{\partial {f}_n}{\partial x_n}(x_1,\dots, x_n) > 0.$  If this last change of $\Omega$ makes the point $(a, 0)$ not generic in $\widetilde{\Sigma},$ we replace $(a, 0)$ by another generic point and continue to assume that $(a, 0)$ is generic in $\widetilde{\Sigma}.$

Define the definable continuous mapping $\Psi\colon \Omega \to  \mathbb R^n$ by 
$$\Psi(x_1,\dots, x_n)\ :=\ (x_1,\dots, x_{n-1}, {f}_n(x_1, \dots , x_n)).$$
Obviously, $\Psi$ is identity on $\widetilde{\Sigma}$ and differentiable on $\Omega \setminus \widetilde{\Sigma}.$ Furthermore, $\Psi$ is injective since for every $(x_1,\dots, x_{n-1}, 0) \in \widetilde{\Sigma},$ the function $x_n \mapsto {f}_n(x_1,\dots, x_n)$ is strictly increasing. In light of Lemma~\ref{Brouwer1}, $\Psi$ is a homeomorphism from $\Omega$ onto $\Psi(\Omega).$ Observe that $J\Psi(x) = \frac{\partial {f}_n}{\partial x_n}(x)$ for all $x \in \Omega \setminus \widetilde{\Sigma},$ hence $J\Psi$ is positive outside of $\widetilde{\Sigma}.$ Consequently, we can replace the mapping ${f}$ by the mapping ${f} \circ \Psi^{-1}$ without changing the sign of the Jacobian of ${f}$ on $\Omega \setminus \widetilde{\Sigma}.$ Without loss of generality, assume from now on that ${f}_n(x_1,\dots, x_n) = x_n$ on $\Omega.$

For $x = (x_1,\dots, x_n) \in \Omega \setminus \widetilde{\Sigma}$ we have
\begin{equation}\label{Eqn1}
J{f}(x)\ =\
\left|\begin{array}{ccccc}
\frac{\partial {f}_1}{\partial x_1}(x)& \cdots& \frac{\partial {f}_1}{\partial x_{n-1}}(x)&\frac{\partial {f}_1}{\partial x_n}(x)\\
\vdots&\ddots&\vdots&\vdots\\
\frac{\partial {f}_{n-1}}{\partial x_1}(x)& \cdots& \frac{\partial {f}_{n-1}}{\partial x_{n-1}}(x)&\frac{\partial {f}_{n-1}}{\partial x_n}(x)\\
0&\cdots&0&1
\end{array}\right|
\ =\
\left|\begin{array}{ccccc}
\frac{\partial {f}_1}{\partial x_1}(x)& \cdots& \frac{\partial {f}_1}{\partial x_{n-1}}(x)\\
\vdots&\ddots&\vdots\\
\frac{\partial {f}_{n-1}}{\partial x_1}(x)& \cdots& \frac{\partial {f}_{n-1}}{\partial x_{n-1}}(x)
\end{array}\right|.
\end{equation}

As ${f}$ is open, the restriction of ${f}$ to ${{f}^{-1}(\mathbb R^{n-1}\times\{0\})}$ is an open mapping from
${{f}^{-1}(\mathbb R^{n-1}\times\{0\})}$ into $\mathbb R^{n-1}\times\{0\}.$ Hence the mapping
$${g} \colon W \to \mathbb R^{n-1},  \quad x' \mapsto ({f}_1(x', 0), \dots, {f}_{n - 1}(x', 0)),$$ 
is definable, open and continuous, where $W := \{x' \in \mathbb R^{n-1}:\ (x', 0) \in \widetilde{\Sigma}\}$ is a definable open subset of $\mathbb{R}^{n - 1}.$
By \cite[Theorem~3.10]{Denkowski2017} (see also \cite[Proposition, page~298]{Gamboa1996}), the fibers of $g$ are finite.
Applying Lemma~\ref{Lemma33}(ii) to the mapping $g,$ we have that the set $R_g$ is dense in $W.$ Thus, replacing $a$ by a point in $R_g$ if necessary, we may assume that $J{g}(a)\ne 0.$

By Lemma~\ref{Lemma32}, for $1\leqslant i, j \leqslant n - 1,$ we have
$$\lim_{x \to (a, 0), \, x \in \Omega}\frac{\partial {f}_i}{\partial x_{j}}(x)\ =\ \frac{\partial {g}_i}{\partial x_{j}}(a).$$
It follows then from~\eqref{Eqn1} that
$$\lim_{x \to (a, 0), \, x \in\Omega}J{f}(x)\ =\ J{g}(a).$$
Hence, for $x \in \Omega$ close enough to $(a, 0),$ whether in ${R}_1$ or in ${R}_2,$ the sign of $J{f}(x)$ is the same as the sign of $J{g}(a).$ 
In particular, the sign of $J{f}$ is the same in ${R}_1$ and in ${R}_2.$ 

We have thus proved that the Jacobian $Jf$ has constant sign on $R_f.$ So, without loss of generality, we may assume that the Jacobian $Jf$ is positive on $R_f.$ It remains to show that
$$Jf(x) \geqslant 0 \quad \textrm{ for all } \quad x \in D_f.$$
To see this, let $x \in D_f$ be such that $Jf(x) \ne 0.$
In view of Lemma~\ref{Brouwer3}(i), there exists a definable open and bounded neighborhood $W$ of $x$ such that $\overline{W} \cap f^{-1}(f(x)) = \{x\}$ and 
\begin{eqnarray} \label{Eqn2}
\deg (f, W, f(x)) &=& \mathrm{sign} Jf(x). 
\end{eqnarray} 
On the other hand, by Lemma~\ref{Lemma33}, the set $f(R_f)$ is dense in $f(\Omega).$ Note that $f(W)$ is an open subset of $f(\Omega)$ because the mapping $f$ is open. Consequently, there exists a point $y \in f(W)$ with $\|y - f(x)\| < \mathrm{dist}(f(x), f(\partial W))$ such that 
$f^{-1}(y) \subset R_f.$ In particular, for every $w \in f^{-1}(y),$ the Jacobian matrix $df(w)$ exists and is nonsingular. It follows from Lemmas~\ref{Brouwer2}(ii) and \ref{Brouwer3}(ii) that
\begin{eqnarray}  \label{Eqn3}
\deg(f, W, f(x)) &=& \deg(f, W, y)  \ = \ \sum_{w \in f^{-1}(y)} \mathrm{sign} Jf(w) \ > \ 0.
\end{eqnarray}
Combining \eqref{Eqn2} and \eqref{Eqn3}, we get $Jf(x) > 0,$ which completes the proof of the implication  (i)~$\Rightarrow$~(ii). 

\medskip 
(i) $\Rightarrow$ (iv): 
If $n = 1,$ then ${f}$ is strictly monotone and there is nothing to prove. So for the rest of the proof we assume that $n > 1.$

By \cite[Theorem~3.10]{Denkowski2017} (see also \cite[Proposition, page~298]{Gamboa1996}), the fibers of $f$ are finite.
Hence, it suffices to show that $\dim B_f \leqslant n - 2.$ (Recall that $B_f$ denotes the set of points at which $f$ is not a local homeomorphism.)
By the inverse mapping theorem, we have $B_f \subset \Omega \setminus R_f.$ This, together with Lemmas~\ref{DimensionLemma} and \ref{DiffrentiableLemma}, implies
\begin{eqnarray*}
\dim B_f &\leqslant& \dim (\Omega \setminus R_f) \ \leqslant \ n - 1.
\end{eqnarray*}

Suppose for contradiction that $\dim B_f = n - 1.$ By analysis similar to that in the proof of the implication (i)~$\Rightarrow$~(ii), we may assume the following conditions hold:
\begin{enumerate}[{\rm (a)}]
\setcounter{enumi}{0}
\item $\Omega\setminus R_f = B_f$;
\item $B_f \subset \{x_n = 0\}$ and $f(B_f)\subset \{y_n=0\}$; 
\item $\Omega\setminus B_f$ has two connected components, denoted by $R_1$ and $R_2$ with
$${R}_1 \subset \{x_n > 0\},\ \ {R}_2 \subset \{x_n < 0\},\ \ {f}({R}_1) \subset \{y_n > 0\}\ \text{ and }\ {f}({R}_2) \subset \{y_n < 0\};$$
\item ${f}_n(x_1,\dots, x_n) = x_n$ on $\Omega;$
\item the mapping 
$${g} \colon W \to \mathbb R^{n-1},  \quad x' \mapsto ({f}_1(x', 0), \dots, {f}_{n - 1}(x', 0)),$$ 
is definable, open and continuous, where $W := \{x' \in \mathbb R^{n-1}:\ (x', 0) \in B_f\}$ is a definable open subset of $\mathbb{R}^{n - 1};$
\item $J{g}(x')\ne 0$ for all $x' \in W.$
\end{enumerate}

Let $\overline{x} := ({a}, 0) \in B_f.$ Then $f$ is not a local homeomorphism at $\overline{x},$ and so it is not injective. Hence there are sequences $x^k\to\overline x$ and $y^k\to\overline x$ such that $x^k\ne y^k$ and $ f(x^k)= f(y^k)$ for all $k.$
Taking subsequences if needed, we can suppose that the sequences $x^k$ and $y^k$ belong to only one of the sets $R_1$, $R_2$ and $B_f.$
Note that by Item~(f), the restriction of $f$ on $B_f$ is a local diffeomorphism at $\overline x.$ So $x^k, y^k\not\in B_f$ for all $k$ large enough.
With no loss of generality, assume that $x^k, y^k\in R_1.$ Furthermore, by construction, we can assume that the segment joining $x^k$ and $y^k$ is contained in $R_1$ for all $k$.

Clearly $ f$ is $C^1$ on $R_1$.
So for each $i=1,\dots,n-1$ and for each $k$, by the mean value theorem, there is a point $z^{ik}$ in the segment joining $x^k$ and $y^k$ such that
\begin{eqnarray}\label{mean}
0 & = & f_i(y^k)- f_i(x^k) \ = \ [d f_i(z^{ik})](y^k-x^k).
\end{eqnarray}
Let $$v^k:=\frac{y^k-x^k}{\|y^k-x^k\|}.$$
By Item~(d), we have $f_n(x) = x_n$ for all $x\in R_1,$ so the condition $ f(x^k)= f(y^k)$ implies $x^k_n=y^k_n$, i.e., $v^k_n=0$ for all $k.$
Furthermore, in view of~\eqref{mean}, we have 
$$[d f_i(z^{ik})](v^k)=\frac{1}{\|y^k-x^k\|}[d f_i(z^{ik})](y^k-x^k)=0.$$
Equivalently
\begin{equation}\label{sum}
\sum_{j=1}^{n-1}\frac{\partial { f}_i}{\partial x_{j}}(z^{ik})v^k_j=0.
\end{equation}

On the other hand, by Lemma~\ref{Lemma32}, for $1\leqslant i, j \leqslant n - 1,$ we have
$$\lim_{x \to ({a}, 0), \, x \in \Omega}\frac{\partial { f}_i}{\partial x_{j}}(x) = \frac{\partial {g}_i}{\partial x_{j}}({a}).$$
In addition, since $x^k\to\overline x$ and $y^k\to\overline x$, it follows that $z^{ik}\to\overline x$. Hence
$$\lim_{k\to+\infty}\frac{\partial { f}_i}{\partial x_{j}}(z^{ik}) = \frac{\partial {g}_i}{\partial x_{j}}({a}).$$
Furthermore, taking a subsequence if necessary, we can assume that the sequence $v^k$ converges to a limit $v = (v', 0).$
It follows then from~\eqref{sum} that
$$\sum_{j=1}^{n-1}\frac{\partial {g}_i}{\partial x_{j}}({a}) v_j=0.$$
Equivalently $d{g}({a})(v')=0.$
On the other hand, $d{g}({a})$ is a linear isomorphism by Item~(f).
In addition, since $v^k\to v = (v', 0)$ and $\|v^k\|=1$ for all $k$, it follows that $\|v'\|=\|v\|=1$.
These imply $d{g}({a})(v')\ne 0,$ which is a contradiction. 
Therefore the dimension of $B_f$ must be smaller than $n - 1.$

\medskip 
(iv) $\Rightarrow$ (iii): 
Let $x^0, x^1 \in R_f.$ By the inverse mapping theorem, $x^0, x^1 \in \Omega \setminus B_f.$ On the other hand, the set $\Omega \setminus B_f$ is path connected because of our assumption that $\dim B_f \leqslant n - 2.$ Hence, there exists a continuous curve 
$\alpha \colon [0, 1] \to \Omega \setminus B_f$
such that $\alpha(0) =x^0$ and $\alpha(1) = x^1.$ For each $t \in [0, 1],$ we can find an open neighbourhood $U_t$ of $\alpha(t)$ 
with $\overline{U}_t \subset \Omega$ such that the restriction of $f$ to $U_t,$ denoted by $f|_{U_t},$ is a homeomorphism. Using properties of the Brouwer degree, it is not hard to check that the function
$$[0, 1] \to \mathbb{Z}, \quad t \mapsto \deg(f|_{U_t}, U_t, f(\alpha(t))),$$
is constant. In particular, we have
$$\deg(f|_{U_0}, U_0, f(x^0)) = \deg(f|_{U_1}, U_1, f(x^1)).$$
This relation, together with Lemma~\ref{Brouwer3}(i), gives
$$\mathrm{sign} Jf(x^0) = \mathrm{sign} Jf(x^1).$$
Since $x^0, x^1$ are two arbitrary points in $R_f,$ we get the desired conclusion.
\end{proof}

Recall that a continuous mapping $f \colon \mathbb{R}^n \to \mathbb{R}^n$ is {\em piecewise affine} if there exists a set of triples $(\Omega_i, A_i, b_i), i = 1, \ldots, k,$ such that each $\Omega_i$ is a polyhedral set in $\mathbb{R}^n$ with non-empty interior, each $A_i$ is an $n\times n$-matrix, each $b_i$ is a vector in $\mathbb{R}^n,$ and
\begin{enumerate}[{\rm (a)}]
\item $\mathbb{R}^n = \cup_{i = 1}^k \Omega_i$;
\item for $i \ne j,$ $\Omega_i \cap \Omega_j$ is either empty or a proper common face of $\Omega_i$ and $\Omega_j$;
\item $f(x) = A_ix + b_i$ on $\Omega_i,$ $i = 1, \ldots, k.$
\end{enumerate}
We say that $f$ is {\em coherently oriented} if all the matrices $A_i$ have the same nonzero determinant sign. The following result is well-known; 
for more details, please refer to \cite[Theorem~2.3.1]{Scholtes2012} and the references therein.

\begin{corollary}\label{Corollary31}
Let $f \colon \mathbb{R}^n \to \mathbb{R}^n$ be a piecewise affine mapping.
Then ${f}$ is open if and only if it is coherently oriented.
\end{corollary}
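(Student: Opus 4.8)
The plan is to deduce the corollary directly from Theorem~\ref{MainTheorem}, after observing that every piecewise affine mapping is definable. Indeed, the polyhedral pieces $\Omega_i$, the affine laws $x \mapsto A_i x + b_i$, and hence the graph of $f$ are all semi-algebraic; since the defining axioms of an o-minimal structure (in particular, containment of all algebraic sets together with closure under Boolean operations and projection) force every semi-algebraic set to be definable, $f$ is a definable continuous mapping on the definable connected open set $\Omega = \mathbb{R}^n$. Thus the equivalence (i)~$\Leftrightarrow$~(iii) of Theorem~\ref{MainTheorem} applies, and the whole task reduces to translating ``coherently oriented'' into the language of finite fibers and the sign of $Jf$ on $R_f$. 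The one elementary fact driving this translation is that on $\operatorname{int}\Omega_i$ the mapping $f$ coincides with the affine map $x\mapsto A_i x + b_i$, so there $f$ is of class $C^\infty$ with $Jf \equiv \det A_i$; consequently $\bigcup_i \operatorname{int}\Omega_i \subseteq R_f$ exactly when each $\det A_i \ne 0$.

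For the implication that coherent orientation implies openness, I would first note that each $A_i$ is nonsingular, so the restriction of $f$ to $\Omega_i$ is injective. Since $f^{-1}(y) = \bigcup_{i=1}^k \bigl(f^{-1}(y)\cap\Omega_i\bigr)$ and each intersection contains at most one point, every fiber has at most $k$ elements and is in particular finite. Moreover $R_f = \bigcup_i \operatorname{int}\Omega_i$, and on each such cell $Jf = \det A_i$ carries the common nonzero sign, so $Jf$ does not change sign on $R_f$. The implication (iii)~$\Rightarrow$~(i) of Theorem~\ref{MainTheorem} then yields that $f$ is open.

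For the converse, assume $f$ is open. If some $\det A_i = 0$, then $f$ maps the nonempty open set $\operatorname{int}\Omega_i$ into the proper affine subspace $A_i(\mathbb{R}^n) + b_i$, whose interior is empty; this contradicts openness, so every $A_i$ is nonsingular. Hence again $R_f = \bigcup_i \operatorname{int}\Omega_i$, and the implication (i)~$\Rightarrow$~(iii) of Theorem~\ref{MainTheorem} guarantees that $Jf$ does not change sign on $R_f$. Since $Jf$ equals $\det A_i$ on $\operatorname{int}\Omega_i$, all the determinants $\det A_i$ share one and the same nonzero sign; that is, $f$ is coherently oriented.

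The argument is in essence a dictionary between the two descriptions, so there is no genuinely hard analytic step. The only point requiring care is the bookkeeping that certifies $R_f$ to be exactly the union of the open cells $\operatorname{int}\Omega_i$ whenever all the $A_i$ are nonsingular, and that the lower-dimensional common faces contribute nothing to $R_f$ and affect neither the finiteness of the fibers nor the sign computation. This is where I would focus attention, although it reduces to the standard observations that a proper face has empty interior and that an affine map with nonsingular linear part is globally injective.
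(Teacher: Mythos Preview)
Your proof is correct and follows precisely the route the paper takes: the paper's proof is the single line ``This is a direct consequence of Theorem~\ref{MainTheorem},'' and you have simply supplied the details of that deduction. The one minor imprecision is the claim that $R_f$ is \emph{exactly} $\bigcup_i \operatorname{int}\Omega_i$ (it could be larger if adjacent pieces share the same affine law), but this does not affect the argument since any point of $R_f$ still has $Jf$ equal to some $\det A_i$ by continuity from the dense union of interiors, and you flag this bookkeeping yourself.
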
 
\begin{proof}
This is a direct consequence of Theorem~\ref{MainTheorem}.
\end{proof}

Finally we prove that Whyburn's conjecture is true for definable mappings. 

\begin{theorem}\label{WhyburnConjecture}
Let $f \colon \overline{\mathbb{B}^n_r} \to \overline{\mathbb{B}^n_s}$ be a definable open continuous mapping such that $f^{-1}(\mathbb{S}_s^{n - 1}) = \mathbb{S}_r^{n - 1}$ and the restriction of $f$ to $\mathbb{S}_r^{n - 1}$ is a homeomorphism. Then $f$ is a homeomorphism.
\end{theorem}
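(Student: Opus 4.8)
The plan is to reduce the statement to a Brouwer degree computation and to show that $f$ takes each interior point exactly once. First I would pass to the open ball $\mathbb{B}^n_r$. Since $f^{-1}(\mathbb{S}^{n-1}_s) = \mathbb{S}^{n-1}_r$ and $f(\overline{\mathbb{B}^n_r}) \subseteq \overline{\mathbb{B}^n_s}$, every point of $\mathbb{B}^n_r$ is sent into the open ball $\mathbb{B}^n_s$, and $f|_{\mathbb{B}^n_r} \colon \mathbb{B}^n_r \to \mathbb{R}^n$ is again open: an open subset of $\mathbb{B}^n_r$ has open image in $\overline{\mathbb{B}^n_s}$ lying inside the interior, hence open in $\mathbb{R}^n$. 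Thus Theorem~\ref{MainTheorem} applies, giving that the fibers of $f$ are finite and $Jf$ has constant sign on $R_f$; after composing $f$ with a reflection of the target if necessary, I may assume $Jf > 0$ on $R_f$ (this composition preserves all hypotheses and does not affect whether $f$ is a homeomorphism). Note also that $f^{-1}(y) \subset \mathbb{B}^n_r$ for every $y \in \mathbb{B}^n_s$.

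Next I would compute the Brouwer degree over interior points. Fix $y \in \mathbb{B}^n_s$. Since $y \notin \mathbb{S}^{n-1}_s = f(\mathbb{S}^{n-1}_r) = f(\partial \mathbb{B}^n_r)$, the degree $\deg(f, \mathbb{B}^n_r, y)$ is defined and, by the standard boundary dependence of the Brouwer degree, equals the degree of the sphere map $x \mapsto (f(x) - y)/\|f(x) - y\|$ on $\mathbb{S}^{n-1}_r$. As $f|_{\mathbb{S}^{n-1}_r}$ is a homeomorphism onto $\mathbb{S}^{n-1}_s$ and radial projection from the interior point $y$ is a homeomorphism of $\mathbb{S}^{n-1}_s$ onto the unit sphere, this sphere map is a homeomorphism and so has degree $\pm 1$. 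By homotopy invariance (Lemma~\ref{Brouwer2}(iii)) the value $\varepsilon := \deg(f, \mathbb{B}^n_r, y) \in \{-1, +1\}$ is the same for all $y \in \mathbb{B}^n_s$.

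The heart of the argument, and the step I expect to be the main obstacle, is to prove that each local index is positive, so that the degree genuinely counts preimages. Fix $y \in \mathbb{B}^n_s$ and write $f^{-1}(y) = \{x_1, \dots, x_m\}$. Choosing pairwise disjoint open sets $W_i \ni x_i$ with $\overline{W_i} \subset \mathbb{B}^n_r$ and $\overline{W_i} \cap f^{-1}(y) = \{x_i\}$, the domain decomposition property gives $\varepsilon = \sum_{i=1}^m \deg(f, W_i, y)$. To bound $\deg(f, W_i, y)$ from below I would use that $f(W_i)$ is an open neighborhood of $y$ (openness of $f$) and that, exactly as in the proof of Theorem~\ref{MainTheorem}, the critical values $f(\mathbb{B}^n_r \setminus R_f)$ form a definable set of dimension less than $n$; hence there are regular values $y' \in f(W_i)$ arbitrarily close to $y$ whose fibers lie in $R_f$. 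For such $y'$ close enough to $y$, local constancy (Lemma~\ref{Brouwer2}(ii)) together with Lemma~\ref{Brouwer3}(ii) yields
$$\deg(f, W_i, y) = \deg(f, W_i, y') = \sum_{x' \in W_i \cap f^{-1}(y')} \mathrm{sign} Jf(x') = \#\bigl(W_i \cap f^{-1}(y')\bigr) \geqslant 1,$$
the final inequality because $y' \in f(W_i)$ and $Jf > 0$ on $R_f$. Therefore $\varepsilon = \sum_{i=1}^m \deg(f, W_i, y) \geqslant m \geqslant 1$, which forces $\varepsilon = +1$ and $m = 1$; thus every point of $\mathbb{B}^n_s$ has exactly one preimage.

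Finally I would assemble global bijectivity. Interior points have unique preimages in $\mathbb{B}^n_r$ by the previous step, boundary points have unique preimages in $\mathbb{S}^{n-1}_r$ since $f|_{\mathbb{S}^{n-1}_r}$ is a homeomorphism, and the two regimes never interfere because $f^{-1}(\mathbb{S}^{n-1}_s) = \mathbb{S}^{n-1}_r$; hence $f$ is injective. Surjectivity onto $\overline{\mathbb{B}^n_s}$ follows from $\varepsilon \neq 0$ via Lemma~\ref{Brouwer2}(i) over the interior and from the boundary homeomorphism over $\mathbb{S}^{n-1}_s$. Consequently $f$ is a continuous bijection from the compact space $\overline{\mathbb{B}^n_r}$ onto the Hausdorff space $\overline{\mathbb{B}^n_s}$, and is therefore a homeomorphism.
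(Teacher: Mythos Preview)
Your proof is correct and rests on the same core mechanism as the paper's---apply Theorem~\ref{MainTheorem} to get finite fibers and a fixed Jacobian sign, show the relevant Brouwer degree is $\pm 1$, and then use the sign constancy to force each interior value to have exactly one preimage---but the way you obtain $\deg(f,\mathbb{B}^n_r,y)=\pm 1$ is genuinely different. The paper extends $f$ radially to a definable map $g\colon\mathbb{R}^n\to\mathbb{R}^n$, argues that $g$ is open, applies Theorem~\ref{MainTheorem} to $g$, and then picks a regular point $x$ in the annulus $r<\|x\|<r'$ where the radial extension is automatically injective, reading off $\deg=\pm1$ from $g^{-1}(g(x))=\{x\}$; the contradiction with two interior preimages then gives $|\deg|\geqslant 2$. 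You instead stay on the closed ball, normalize $Jf>0$ by a reflection, and compute the degree via the boundary sphere map, which is a homeomorphism because $f|_{\mathbb{S}^{n-1}_r}$ is; the preimage count $m\leqslant\varepsilon=1$ then follows directly from domain decomposition and the positivity of the local indices. Your route avoids constructing the extension and the verification (glossed over in the paper) that it is open; the paper's route avoids invoking the boundary/Hopf description of the degree, which, while standard, is not among the degree properties recorded in Lemmas~\ref{Brouwer2}--\ref{Brouwer3}. The remaining steps---perturbation to a value with fiber in $R_f$ using Lemma~\ref{Lemma33}, and the conclusion via compactness---are essentially identical.
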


\begin{proof}
By the invariance of domain theorem (Lemma~\ref{Brouwer1}), it suffices to show that $f$ is injective. To this end, define the mapping ${g} \colon \mathbb{R}^n \to \mathbb{R}^n $ by
$${g}(x) := 
\begin{cases}
f(x) & \textrm{ if } \ \|x\| < r,\\
\frac{\|x\|}{r} f \left(\frac{r}{\|x\|} x\right) & \textrm{ otherwise}.
\end{cases}$$
Then it is easy to see that ${g}$ is a definable open continuous mapping. 
By Theorem~\ref{MainTheorem}, the fibers of ${g}$ are finite and the Jacobian $Jg$ does not change sign on $R_{g}.$ Moreover, in view of Lemma~\ref{Lemma33}(ii), $R_f$ is dense in $\mathbb{R}^n.$ Fix $r' > r.$ There exists a point $x \in R_g$ with $r < \|x\| < r'.$ By construction, $g^{-1}(g(x)) = \{x\}.$ It follows from Lemma~\ref{Brouwer3} that
$$\deg(g|_{\overline{\mathbb{B}^n_{r'}}}, \mathbb{B}^n_{r'}, g(x)) = \mathrm{sign} Jg(x) = \pm 1,$$
where $g|_{\overline{\mathbb{B}^n_{r'}}}$ stands for the restriction of $g$ to ${\overline{\mathbb{B}^n_{r'}}}.$
This, together with Lemma~\ref{Brouwer2}(iii), yields
\begin{equation} \label{PT6}
\deg(g|_{\overline{\mathbb{B}^n_{r'}}}, \mathbb{B}^n_{r'}, y) = \pm 1 \quad \textrm{ for all } \quad y \in \mathbb{B}^n_s.
\end{equation}

We now show that $f$ is injective, or equivalently, the restriction of $g$ to $\mathbb{B}_r^n$ is injective.
By contradiction, suppose that there exist two distinct points $x^0, x^1 \in \mathbb{B}_r^n$ whose image point $y \in g(\mathbb{B}_r^n) = \mathbb{B}_s^n.$ Let $U^0$ and $U^1$ be disjoint open sets containing $x^0$ and $x^1,$ respectively. Then $g(U^0) \cap g(U^1)$ is a nonempty open set and thus contains a point $y'$ of $\mathbb{B}_s^n \setminus g(\mathbb{R}^n \setminus R_g).$ This means that $g^{-1}(y')$ is a subset of $\mathbb{B}_r^n \cap R_g$ and it contains at least two points. Since the Jacobian $Jg$ does not change sign on $R_g,$ it follows from Lemma~\ref{Brouwer3} that
$$\big |\deg(g|_{\overline{\mathbb{B}^n_{r'}}}, \mathbb{B}_{r'}^n, y') \big | = \big | \sum_{x \in g^{-1}(y')} \mathrm{sign} Jg(x) \big | \geqslant 2,$$
which contradicts \eqref{PT6}. The theorem is proved.
\end{proof}

\begin{remark}{\rm
Another proof of Theorem~\ref{WhyburnConjecture} can be obtained by applying Theorem~\ref{MainTheorem} and \cite[Theorem~5.5]{Vaisala1966}; the detail is left to the reader.
}\end{remark}

\bibliographystyle{abbrv}


\end{document}